\def\Aut{\operatorname{Aut}}
\def\ker{\operatorname{ker}}
\def\Pp{\mathbb{P}}
\def\id{\operatorname{id}}
\def\QAut{\operatorname{QAut}}
\def\Sym{\operatorname{Sym}}
\def\SU{\operatorname{SU}}
\def\A{\mathbb{A}}
\def\C{\mathbb{C}}
\def\N{\mathbb{N}}
\def\AA{\mathcal{A}}
\newtheorem{thm}{Theorem}[section]
\newtheorem{lemma}[thm]{Lemma}
\newtheorem{prop}[thm]{Proposition}
\theoremstyle{definition}
\newtheorem{definition}[thm]{Definition}
\theoremstyle{remark}
\newtheorem{remark}[thm]{Remark}
\newtheorem{remarks}[thm]{Remarks}
\newtheorem{example}[thm]{Example}
\numberwithin{equation}{section}
\tikzstyle{vertex}=[circle]
\tikzstyle{goto}=[->,shorten >=1pt,>=stealth,semithick]
\begin{document}

\date{\today}
\title[Self-similar quantum groups]{Self-similar quantum groups}
\author{Nathan Brownlowe}
\address{Nathan Brownlowe, School of Mathematics and Statistics, University of Sydney,
NSW 2006, Australia.}
\email{nathan.brownlowe@sydney.edu.au}

\author{David Robertson}
\address{David Robertson, School of Science and Technology, University of New England, NSW 2351, Australia.}
\email{david.robertson@une.edu.au}

 \thanks{Brownlowe was supported by the Australian Research Council grant DP200100155, and both authors were supported by the Sydney Mathematical Research Institute.}

\subjclass[2010]{}
\keywords{}
\thanks{}

\begin{abstract}
We introduce the notion of self-similarity for compact quantum groups. For a finite set $X$, we introduce a $C^*$-algebra $\A_X$, which is the quantum automorphism group of  the infinite homogeneous rooted tree $X^*$. Self-similar quantum groups are then certain quantum subgroups of $\A_X$. Our main class of examples are called finitely-constrained self-similar quantum groups, and we find a class of these examples that can be described as quantum wreath products by subgroups of the quantum permutation group.
\end{abstract}

\maketitle

\setcounter{tocdepth}{1}


\section{Introduction}\label{sec:intro}

Self-similar groups are a class of groups acting faithfully on an infinite rooted homogeneous tree $X^*$. In particular, given an automorphism $g\in\Aut(X^*)$ and a vertex $w\in X^*$, by identifying $wX^*$ with $g(w)X^*$, we get an automorphism $g|_w\in\Aut(X^*)$ which is uniquely determined by the identity
\[
g\cdot (wv)=(g\cdot w)g|_w\cdot v\quad\text{for all }v\in X^*.
\]
The automorphism $g|_w$ is called the \textit{restriction} of $g$ by $w$, and a subgroup $G\le \Aut(X^*)$ is \textit{self-similar} if it is closed under restrictions. Self-similar groups are a significant class of groups that play an important role in geometric group theory, and have been a rich source of groups displaying interesting phenomena. Most notably, the Grigorchuk group \cite{Gr80} is a self-similar group which is  an infinite, finitely generated periodic group and provided the first example of a group with intermediate growth, as well as the first known amenable group to not be elementary amenable.

When the group of automorphisms $\Aut(X^*)$ is equipped with the permutation topology, the closed self-similar groups are examples of compact, totally disconnected groups, and hence are profinite groups.  A particular class of examples of interest are the self-similar groups of \textit{finite type}, which are subgroups of automorphisms of $X^*$ that act like elements of a given finite group locally around every vertex. Grigorchuk introduced this concept in \cite{Gr05}, where he also showed that the closure of the Grigorchuk group is a self-similar group of finite type. Note that these groups are called finitely constrained self-similar groups in \cite{Sunic11}, and we will use that terminology. 

The theory of compact quantum groups is by now a very substantial part of the wider field of quantum groups, and one which sits in the framework of operator algebras. The theory started with Woronowicz's introduction of the quantum $\SU(2)$ group in \cite{WorSU2}. Woronowicz then defined compact matrix quantum groups in \cite{Wor87}, before developing a general theory of compact quantum groups in \cite{Wor95}. An important class of compact matrix quantum groups was identified and studied by Wang through his quantum permutation groups in \cite{Wang1998}. Wang was motivated by one of Connes' questions from his noncommutative geometry program: what is the \textit{quantum} automorphism group of a space? Wang's work in \cite{Wang1998} provided an answer for finite spaces; in particular, Wang formally defined the notion of a quantum automorphism group, and then showed that his quantum permutation group $A_s(n)$ is the quantum automorphism group of the space with $n$ points. For three or fewer points this algebra is commutative, and hence indicating no quantum permutations; but for four or more points, remarkably the algebra is noncommutative and infinite-dimensional. 

Since the appearance of \cite{Wang1998}, follow-up work progressed in multiple directions, including the results of Bichon in \cite{Bichon03} in which he introduced  quantum automorphisms of finite graphs. These algebras are quantum subgroups of the quantum permutation groups. Bichon used this construction to define the quantum dihedral group $D_4$. Later still in \cite{BB09}, Banica and Bichon classified all the compact quantum groups acting on four points; that is, all the compact quantum subgroups of $A_s(4)$. Quantum automorphisms of infinite graphs have recently been considered by Rollier and Vaes in \cite{RV}, and by Voigt in \cite{V}.

Our current work is the result of us asking the question: is there a reasonable notion of self-similarity for \textit{quantum} groups? We answer this question in the affirmative for compact quantum groups. We do this by first constructing the quantum automorphism group $\A_X$ of the homogeneous rooted tree $X^*$, and then identifying the quantum analogue of the restriction maps $g\mapsto g|_w$ for $g\in\Aut(X^*)$, $w\in X^*$. We then define a self-similar quantum group to be any quantum subgroup $A$ of $\A_X$ for which the restriction maps factor through the quotient map $\A_X\to A$. We characterise self-similar quantum groups in terms of a certain homomorphism $A\otimes C(X)\to C(X)\otimes A$, which can be thought of as quantum state-transition function. The main class of examples we examine are quantum analogues of finitely constrained self-similar groups. In our main theorem about these examples we describe a class of finitely constrained self-similar groups as free wreath products by quantum subgroups of quantum permutation groups. 

We start with a small preliminaries section in which we collect all the required definitions from the literature on compact quantum groups. In Section \ref{sec:AX} we then identify a compact quantum group $\A_X$ which we prove is the quantum automorphism group of the homogeneous rooted tree $X^*$. The $C^*$-algebra $\A_X$ is a noncommutative, infinite-dimensional $C^*$-algebra whose abelianisation is the algebra of continuous functions on the automorphism group of the tree $X^*$. In Section~\ref{sec:selfsimilarity} we introduce the notion of self-similarity for compact quantum groups, and we characterise self-similar quantum groups $A$ in terms of morphisms $A\otimes C(X)\to C(X)\otimes A$, mimicking the fact that classical self-similar action are governed by the maps $G\times X\to X\times G\colon (g,x)\mapsto (g\cdot x,g|_x)$. 

In Section~\ref{sec:finitelyconstrained} we define finitely constrained self-similar quantum groups, which are the quantum analogues of the classical finitely constrained self-similar groups studied in \cite{BS13,Sunic11}. In particular, we consider subalgebras $\A_d$ of $\A_X$, which are the quantum automorphism groups of the finite subtrees $X^{[d]}$ of $X^*$ of depth $d$. To each quantum subgroup $\Pp$ of $\A_d$, we construct a quantum subgroup $A_\Pp$, which we prove is a self-similar quantum group. We then build on the work of Bichon in \cite{Bichon04} by constructing free wreath products of compact quantum groups by quantum subgroups of the quantum permutation group (which corresponds to the subalgebra $\A_1$ of $\A_X$), and we prove that every $A_\Pp$ coming from a quantum subgroup $\Pp$ of $\A_1$ is canonically isomorphic to the free wreath product $A_\Pp\ast_w\Pp$. 

	\section{Preliminaries}\label{sec:prelim}
	
	In this section we collect some basics on compact quantum groups. We start with Woronowicz's definition of a compact quantum group \cite{Wor95}.
	
	\begin{definition} \label{CQG:densitydefn}
	A \emph{compact quantum group} is a pair $(A,\Phi)$ where $A$ is a unital $C^*$-algebra and $\Phi : A \to A \otimes A$ is a unital $*$-homomorphism such that
\begin{enumerate}
 \item \label{cqg:coassoc} $(\Phi\otimes \id)\Phi = (\id\otimes\Phi) \Phi$
 \item $\overline{(A\otimes 1)\Phi(A)} = A \otimes A = \overline{(1\otimes A)\Phi(A)}$.
\end{enumerate}
We call $\Phi$ the \emph{comultiplication} and (\ref{cqg:coassoc}) is called \emph{coassociativity}.
\end{definition}

\begin{remark} \label{CQG:matrixdefn}
It is proved in \cite{Wor95} that $(A,\Phi)$ is a compact quantum group if and only if there is a family of matrices $\{a^\lambda = (a^\lambda_{i,j}) \in M_{d_\lambda}(A) : \lambda \in \Lambda\}$ for some indexing set $\Lambda$ such that
\begin{enumerate}
\item \label{CQG:matrixdef-comult} $\Phi(a^\lambda_{i,j}) = \displaystyle\sum_{k=1}^{d_\lambda} a^\lambda_{i,k} \otimes a^{\lambda}_{k,j}$ for all $\lambda\in\Lambda$ and $1\leq i,j \leq d_\lambda$,
\item \label{CQG:matrixdef-inverse} $a^\lambda$ and its transpose  $(a^\lambda)^T$ are invertible elements of $M_{d_\lambda}(A)$ for every $\lambda\in\Lambda$,
\item \label{CQG:matrixdef-dense}the $*$-subalgebra $\mathcal{A}$ of $A$ generated by the entries $\{a^\lambda_{i,j} : 1\leq i,j \leq d_\lambda, \lambda\in \Lambda\}$ is dense in $A$. \end{enumerate}
\end{remark}

\begin{example}\label{eg:permutation}
	A key example for us is Wang's \textit{quantum permutation groups} $(A_s(n),\Phi)$ from \cite{Wang1998}. Here, $n$ is a positive integer, and $A_s(n)$ is the universal $C^*$-algebra generated by elements $a_{ij}$, $1\le i,j\le n$, satisfying \begin{align*}
		& a_{ij}^2=a_{ij}=a_{ij}^*\text{ for all $1\le i,j\le n$,}\\
		& \sum_{j=1}^na_{ij}=1 \text{ for all $1\le i\le n$,}\\
		& \sum_{i=1}^na_{ij}=1 \text{ for all $1\le j\le n$.}
	\end{align*}
The comultiplication $\Phi$ satisfies $\Phi(a_{ij})=\sum_{k=1}^na_{ik}\otimes a_{kj}$ for all $1\le i,j\le n$. 
\end{example}

\begin{definition}
	If $(A_1,\Phi_1)$ and $(A_2,\Phi_2)$ are compact quantum groups, then a \textit{morphism} $\pi$ from $(A_1,\Phi_1)$ to $(A_2,\Phi_2)$ is a homomorphism of $C^*$-algebras $\pi\colon A_1\to A_2$ satisfying $(\pi\otimes \pi)\circ \Phi_1=\Phi_2\circ\pi$.
\end{definition}

\begin{definition}
Let $(A,\Phi)$ be a compact quantum group. A \textit{Woronowicz ideal} is an ideal $I$ of $A$ such that $\Phi(I)\subseteq \ker(q\otimes q)$, where $q$ is the quotient map $A\to A/I$. Then $(A/I,\Phi')$, where $\Phi'\colon A/I\to A/I\otimes A/I$ satisfies $\Phi'\circ q=(q\otimes q)\circ \Phi$ is a compact quantum group called a \textit{quantum subgroup} of $(A,\Phi)$.
\end{definition}

\begin{definition}
A \emph{(left) coaction} of a compact quantum group $(A,\Phi)$ on a unital $C^*$-algebra $B$ is a unital $*$-homomorphism $\alpha : B \to A \otimes B$ satisfying
\begin{enumerate}
 \item \label{coactionid} $(\id\otimes\alpha)\alpha = (\Phi\otimes\id)\alpha$
 \item \label{coactiondensity} $\overline{\alpha(B)(A\otimes 1)} = A \otimes B$.
\end{enumerate}
We refer to (\ref{coactionid}) as the \emph{coaction identity} and (\ref{coactiondensity}) is known as the \emph{Podle\' s condition}.
\end{definition}
	
	\section{Quantum automorphisms of a homogeneous rooted tree}\label{sec:AX}
	
	In this section we introduce a compact quantum group $\A_X$ which we prove is the quantum automorphism group of the infinite homogeneous rooted tree $X^*$. We start with the notion of an action of a compact quantum group on $X^*$. Note that for $n\ge 0$ we write $X^n$ for all the words in $X$ of length $n$, and we then the tree $X^*$ can be identified with $\bigcup_{n\ge }X^n$, where $X^0=\{\varnothing\}$ and $\varnothing$ is the root of the tree.
	
	\begin{definition}
	Let $X$ be a finite set and let $(A,\Phi)$ be a compact quantum group. An action of $A$ on the homogeneous rooted tree $X^*$ is a system
	\[
	 \alpha = (\alpha_n : C(X^n) \to A \otimes C(X^n))
	\]
of left coactions, such that for any $m < n$ the diagram
		\begin{center}
			\begin{tikzcd}
				C(X^m) \arrow[r, "i_{m,n}"] \arrow[d, "\alpha_m"]
				& C(X^n) \arrow[d, "\alpha_n"] \\
				A \otimes C(X^m) \arrow[r, "\id \otimes i_{m,n}"]
				& A \otimes C(X^n)
			\end{tikzcd}
		\end{center}
commutes, where $i_{m,n} : C(X^m) \to C(X^n)$ is the injective homomorphism satisfying
\[
 i_{m,n}(p_w) = \sum_{w' \in X^{n-m}} p_{ww'}.
\]
	\end{definition}

We now define the main object of interest in this section, the $C^*$-algebra $\A_X$, before proving that it is indeed a compact quantum group in Theorem~\ref{thm:AXisaCQG}. At some point in the later stages of this project we became aware of \cite{RV}, and their notion of the quantum automorphism group $\QAut\Pi$ of a locally finite connected graph $\Pi$. A straightforward argument shows that $\A_X$ is $\QAut\Pi$ for $\Pi$ the homogeneous rooted tree, but we include the proof of Theorem~\ref{thm:AXisaCQG} for completeness. 
	
	\begin{definition}\label{def:qaut}
		Let $X$ be a finite set. Define $\mathbb{A}_X$ to be the universal $C^*$-algebra generated by elements $\{a_{u,v}: u,v\in X^n, n\geq 0\}$ subject to the following relations:
		\begin{enumerate}
			\item \label{qaut:identity} $a_{\varnothing,\varnothing} = 1$,
			\item \label{qaut:proj} for any $n\geq 0, u,v\in X^n$, $a_{u,v}^* = a_{u,v}^2 = a_{u,v}$,
			\item \label{qaut:sum} for any $n\geq 0, u,v\in X^n$ and $x\in X$
			\[
			a_{u,v} = \sum_{y\in X} a_{ux,vy} = \sum_{z\in X} a_{uz,vx}.
			\]
		\end{enumerate}
	\end{definition}

\begin{remarks}\label{rmks:aboutAX}
	\begin{itemize}
		\item[(i)] For each $d\in\mathbb{N}$ we denote by $\A_d$ the subalgebra of $\A_X$ generated by $\{a_{u,v}:u,v\in X^d\}$. Note that $\A_1$ is the Wang's quantum permutation group $A_s(|X|)$ from Example~\ref{eg:permutation}.
		
		\item[(ii)] We can interpret (\ref{qaut:sum}) as follows: each projection $a_{u,v}$ decomposes as an $|X|\times |X|$ square of projections $\{a_{ux,vy}:x,y\in X\}$ with a magic square type property where every row and column sums to $a_{u,v}$. For example, if $X=\{0,1,2\}$ we have the following structure.
		\begin{equation*}
			a_{u,v} \mapsto
			\scalebox{0.6}{
				\begin{tikzpicture}[baseline={([yshift=-.5ex]current bounding box.center)},vertex/.style={anchor=base,circle,fill=black!25,minimum size=18pt,inner sep=2pt},declare function={boxW=1.3cm;},
					box/.style={minimum size=boxW,draw}]  
					
					\foreach \x in {0,...,2}
					{
						\foreach \y in {0,...,2}
						{
							\node [box] at ({\x*boxW}, {-\y*boxW}) {$a_{u\y,v\x}$};
						}
					}
				\end{tikzpicture}}
		\end{equation*}
	
		\item[(iii)] Repeated applications of (3) from Definition~\ref{def:qaut} show that for all $u,u',v,v',w\in X^n, n\in\mathbb{N}$, we have		
		\[
		u\neq u',v\neq v' \implies a_{u,w}a_{u',w} = 0 = a_{w,v}a_{w,v'},
		\]
		and that for all $u=u_1\cdots u_n,v=v_1\cdots v_n\in X^n, n\in\N$, and $x,y\in X$ we have
		\[
		a_{x,y}a_{u,v}=a_{u,v}a_{x,y}=\begin{cases}a_{u,v} &\text{if $u_1=x,v_1=y$}\\0 &\text{otherwise.}\end{cases}
		\]
		We will freely use these two identities without comment throughout the rest of the paper.
	\end{itemize}
\end{remarks}

%
	
	\begin{thm}\label{thm:AXisaCQG}
		The $C^*$-algebra $\mathbb{A}_X$ is a compact quantum group with comultiplication $\Delta : \mathbb{A}_X \to \mathbb{A}_X\otimes \mathbb{A}_X$ satisfying
		\[
		\Delta(a_{u,v}) = \sum_{w\in X^n} a_{u,w} \otimes a_{w,v},
		\]
		for all $u,v\in X^n$ and $n\geq 1$
	\end{thm}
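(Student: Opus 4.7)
The plan is to verify the matrix-characterisation of a compact quantum group from Remark~\ref{CQG:matrixdefn}, using the matrices $a^{(n)} := (a_{u,v})_{u,v \in X^n}$ for $n \geq 0$ as the family indexed by $\Lambda = \N$. Condition~(\ref{CQG:matrixdef-dense}) is immediate from the definition of $\A_X$, and condition~(\ref{CQG:matrixdef-comult}) will hold by construction once $\Delta$ is shown to exist, so the bulk of the work splits into (a) producing $\Delta$ as a well-defined unital $*$-homomorphism, and (b) showing that each $a^{(n)}$ and $(a^{(n)})^T$ is invertible.

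The first step is to invoke the universal property of $\A_X$. I would define, for each $u,v\in X^n$, the element
\[
\widetilde a_{u,v} := \sum_{w\in X^n} a_{u,w}\otimes a_{w,v}\in \A_X\otimes \A_X,
\]
and check that the family $\{\widetilde a_{u,v}\}$ satisfies the three relations of Definition~\ref{def:qaut}. Relation~(\ref{qaut:identity}) is immediate. For relation~(\ref{qaut:proj}) I would use self-adjointness of each $a_{u,w}$, and exploit that for fixed $u$ the family $\{a_{u,w}\}_{w\in X^n}$ is a family of pairwise orthogonal projections summing to $1$ (these orthogonality identities are exactly the ones recorded in Remarks~\ref{rmks:aboutAX}, and follow from the fact that projections summing to~$1$ are automatically mutually orthogonal); this collapses the double sum in $(\widetilde a_{u,v})^2$ to a single diagonal sum and yields $\widetilde a_{u,v}$. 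For relation~(\ref{qaut:sum}) I would split $w\in X^{n+1}$ as $w = w'z$ with $w'\in X^n$, $z\in X$, and then apply relation~(\ref{qaut:sum}) of Definition~\ref{def:qaut} inside each tensor factor in turn to collapse $\sum_{y\in X}\widetilde a_{ux,vy}$ down to $\widetilde a_{u,v}$, with the symmetric calculation for the other sum. The universal property then produces a unital $*$-homomorphism $\Delta:\A_X\to \A_X\otimes\A_X$ with $\Delta(a_{u,v})=\widetilde a_{u,v}$, which automatically gives condition~(\ref{CQG:matrixdef-comult}).

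Next I would verify coassociativity directly on generators: both $(\Delta\otimes \id)\Delta(a_{u,v})$ and $(\id\otimes \Delta)\Delta(a_{u,v})$ expand to $\sum_{w,w'\in X^n} a_{u,w}\otimes a_{w,w'}\otimes a_{w',v}$, so they agree on the dense $*$-subalgebra generated by the $a_{u,v}$, hence on all of $\A_X$. For condition~(\ref{CQG:matrixdef-inverse}) I would observe that $a^{(n)}$ is a \emph{magic unitary}: its entries are self-adjoint projections and every row and column sums to $1$. A short computation gives
\[
(a^{(n)}(a^{(n)})^*)_{u,v} = \sum_{w\in X^n} a_{u,w}a_{v,w} = \delta_{u,v}\sum_{w\in X^n} a_{u,w} = \delta_{u,v}\cdot 1,
\]
where the middle equality uses the column-orthogonality of the $a_{\cdot,w}$'s and the rightmost uses the row-sum relation; the analogous computation shows $(a^{(n)})^* a^{(n)} = I$, so $a^{(n)}$ is unitary. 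Since $(a^{(n)})^T$ is likewise a magic unitary, it is also invertible.

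I do not anticipate any genuine obstacle; the one bookkeeping point that needs care is the verification of relation~(\ref{qaut:proj}) for the $\widetilde a_{u,v}$, because it requires knowing in advance that distinct entries in a row (or column) of $a^{(n)}$ are orthogonal projections. This is where I would explicitly invoke the lemma that any finite family of projections summing to $1$ in a unital $C^*$-algebra is automatically pairwise orthogonal, applied to the rows and columns arising from relation~(\ref{qaut:sum}). Once that is in hand, everything else reduces to routine manipulation of double sums, and the theorem follows by assembling the pieces via Remark~\ref{CQG:matrixdefn}.
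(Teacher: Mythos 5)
Your proposal is correct and follows essentially the same route as the paper: construct $\Delta$ via the universal property by checking that the elements $\sum_{w} a_{u,w}\otimes a_{w,v}$ satisfy the defining relations (using row/column orthogonality of the projections, which the paper records in Remarks~\ref{rmks:aboutAX}), verify coassociativity on generators, and establish condition~(\ref{CQG:matrixdef-inverse}) by showing each $a^{(n)}$ is a magic unitary with inverse $(a^{(n)})^T$. No gaps.
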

	
	\begin{proof}
		To see that $\Delta$ exists, it's enough to show that the elements
		\[
		b_{u,v} := \sum_{w\in X^n} a_{u,w} \otimes a_{w,v}
		\]
		for $u,v\in X^n$ and $n\geq 1$ satisfy Definition \ref{def:qaut}.
		
		Firstly, $b_{\varnothing,\varnothing} = \Delta(a_{\varnothing,\varnothing}) = a_{\varnothing,\varnothing}\otimes a_{\varnothing,\varnothing} = 1\otimes 1$. For (\ref{qaut:proj}), we have
		\[
			b_{u,v}^* = \sum_{w\in X^n} a_{u,w}^* \otimes a_{w,v}^* 
			= \sum_{w\in X^n} a_{u,w} \otimes a_{w,v} 
			= b_{u,v} 
		\]
		and
		\begin{align*}
			b_{u,v}^2 &= \left(\sum_{w\in X^n} a_{u,w} \otimes a_{w,v}\right)^2 \\
			&= \sum_{w,z\in X^n} a_{u,w} a_{u,z} \otimes a_{w,v} a_{z,v} \\
			&= \sum_{w\in X^n} a_{u,w}^2 \otimes a_{w,v}^2 \\
			&= \sum_{w\in X^n} a_{u,w} \otimes a_{w,v}\\
			&= b_{u,v}.
		\end{align*}
		For (\ref{qaut:sum}) fix $u,v \in X^n$ and $x\in X$. Then
		\begin{align*}
			b_{u,v} &= \sum_{w\in X^n} a_{u,w} \otimes a_{w,v} \\
			&= \sum_{w\in X^n} \sum_{z\in X} a_{ux,wz} \otimes a_{w,v} \\
			&= \sum_{w\in X^n} \sum_{z\in X} a_{ux,wz} \otimes \sum_{y\in X} a_{wz,vy} \\
			&= \sum_{y\in X} \sum_{w\in X^{n+1}} a_{ux,w} \otimes a_{w,vy} \\
			&= \sum_{y\in X} b_{ux,vy}.
		\end{align*}
		So by the universal property of $\mathbb{A}_X$ there is a homomorphism $\Delta : \mathbb{A}_X \to \mathbb{A}_X \otimes \mathbb{A}_X$ such that
		\[
		\Delta(a_{u,v}) = \sum_{w\in X^n} a_{u,w} \otimes a_{w,v}.
		\]
		For coassociativity, we have
		\begin{align*}
			(\id\otimes \Delta)\circ\Delta(a_{u,v}) &= \sum_{w\in X^n} a_{u,w} \otimes \Delta(a_{w,v}) \\
			&= \sum_{w\in X^n} a_{u,w} \otimes \left(\sum_{z\in X^n} a_{w,z} \otimes a_{z,v}\right) \\
			&= \sum_{z\in X^n} \left(\sum_{w\in X^n} a_{u,w} \otimes a_{w,z} \right) \otimes a_{z,v} \\
			&= \sum_{z\in X^n} \Delta(a_{u,z}) \otimes a_{z,v} \\
			&= (\Delta\otimes \id)\circ\Delta(a_{u,v}).
		\end{align*}
		
Finally, we show that that the set of matrices
\[
 \{a_n=(a_{u,v})_{u,v\in X^n} \in M_{X^n}(\A_X) : n\geq 1\}
\]
satisfy the conditions of Definition \ref{CQG:matrixdefn}. Conditions (\ref{CQG:matrixdef-comult}) and (\ref{CQG:matrixdef-dense}) are clear. For (\ref{CQG:matrixdef-inverse}) we show that given any $n\geq 1$ the matrix $a_n$ is invertible with inverse given by $(a_n)^T$. Given $u,v\in X^n$ we have
\[
 (a_n(a_n)^T)_{u,v} = \sum_{w\in X^n} a_{u,w}a_{v,w} = \delta_{u,v} \sum_{w\in X^n} a_{u,w} = \delta_{u,v} 1_A .
\]
Likewise, we can show $((a_n)^T a_n)_{u,v} = \delta_{u,v}1_A$ and hence $(a_n)^T = a_n^{-1}$ as required.
	\end{proof}

\begin{remark}
	The canonical dense $*$-subalgebra of $\A_X$ is the $*$-subalgebra generated by the projections $\{a_{u,v}:u,v\in X^n, n\ge 0\}$. This is a Hopf $*$-algebra with counit $\varepsilon\colon\A_X\to\C$ and coinverse $\kappa\colon\A_X\to\A_X$ satisfying $\varepsilon(a_{u,v})=\delta_{u,v}$ and $\kappa(a_{u,v})=a_{v,u}$, for $u,v\in X^n$, $n\in\N$.
\end{remark}
	
We now show that $(\mathbb{A}_X,\Delta)$ is the quantum automorphism group (in the sense of \cite[Definition~2.3]{Wang1998}) of the homogeneous rooted tree.
	
\begin{prop} \label{prop:treeauts}
There is an action $\gamma = (\gamma_n)_{n=1}^\infty$ of $\mathbb{A}_X$ on $X^*$. Moreover, if $\alpha = (\alpha_n)_{n=1}^\infty$ is an action of a compact quantum group $(A,\Phi)$ on  $X^*$ then there is a quantum group homomorphism $\pi : \mathbb{A}_X \to A$ such that $(\pi\otimes\id)\circ\gamma_n = \alpha_n$ for any $n\geq 1$.
\end{prop}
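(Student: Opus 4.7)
The proof has two halves: existence of $\gamma$ and the universal property.

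For existence, I would take as the definition $\gamma_n(p_v) := \sum_{u\in X^n} a_{u,v}\otimes p_u$ and verify the axioms. The crucial preliminary observation is that $(a_{u,v})_{u,v\in X^n}$ is a magic unitary: iterating relation~(\ref{qaut:sum}) starting from $a_{\varnothing,\varnothing}=1$ shows every row and every column sums to $1$, and the standard fact that a partition of unity by projections in a $C^*$-algebra is automatically pairwise orthogonal then supplies the needed orthogonality. With this in hand, the unital-$*$-homomorphism property of $\gamma_n$, the coaction identity (via the formula for $\Delta$ in Theorem~\ref{thm:AXisaCQG}), and the Podle\'s condition (automatic in finite dimension once row sums are $1$) are routine computations. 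Tree compatibility $(\id\otimes i_{m,n})\gamma_m=\gamma_n\circ i_{m,n}$ reduces to the iterated identity $a_{u,v}=\sum_{w\in X^{n-m}}a_{uw',vw}$ for any fixed $w'\in X^{n-m}$, which follows from relation~(\ref{qaut:sum}) by induction on $n-m$.

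For universality, given an action $\alpha=(\alpha_n)$ of $(A,\Phi)$ I would expand $\alpha_n(p_v)=\sum_u b^{(n)}_{u,v}\otimes p_u$ and verify the $b^{(n)}_{u,v}$ satisfy the defining relations of $\A_X$; the universal property then immediately produces a unital $*$-homomorphism $\pi\colon\A_X\to A$ with $\pi(a_{u,v})=b^{(n)}_{u,v}$. Self-adjointness and idempotency of each $b^{(n)}_{u,v}$, together with $b^{(0)}_{\varnothing,\varnothing}=1$, come directly from $\alpha_n$ being a unital $*$-homomorphism combined with orthogonality of the $p_u$. The first summation $b^{(n)}_{u,v}=\sum_y b^{(n+1)}_{ux,vy}$ reads off directly from the tree-compatibility square for $i_{n,n+1}$ evaluated at $p_{u'}$. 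The morphism property $(\pi\otimes\pi)\circ\Delta=\Phi\circ\pi$ then comes from the relation $\Phi(b^{(n)}_{u,v})=\sum_w b^{(n)}_{w,v}\otimes b^{(n)}_{u,w}$, which is what the coaction identity $(\Phi\otimes\id)\alpha_n=(\id\otimes\alpha_n)\alpha_n$ spells out. The intertwining $(\pi\otimes\id)\gamma_n=\alpha_n$ is then just $\pi$'s definition on generators, evaluated on $p_v$.

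The genuinely delicate step is the \emph{second} summation in relation~(\ref{qaut:sum}), namely $b^{(n)}_{u,v}=\sum_z b^{(n+1)}_{uz,vx}$: the column sums of $(b^{(n+1)}_{u,v})$. Tree compatibility with $i_{n,n+1}$ yields only the row version of this, and the coaction axioms alone force $\sum_v b^{(n)}_{u,v}=1$ but not $\sum_u b^{(n)}_{u,v}=1$. The natural way forward is to exploit preservation of the uniform trace on $C(X^n)$ by $\alpha_n$, which is exactly the extra requirement built into the ``quantum automorphism group'' convention of \cite[Definition~2.3]{Wang1998} that the proposition appeals to; unwinding $(\id\otimes\tau_n)\alpha_n(p_v)=\tau_n(p_v)\,1$ gives $\sum_u b^{(n)}_{u,v}=1$ directly, and combining this with the row summation at level $n+1$ recovers the column summation for each $n$, closing the argument.
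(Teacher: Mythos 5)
Your plan follows the same overall route as the paper, and you have correctly located the one genuinely nontrivial point; however, the way you propose to close it does not work. The definition of an action on $X^*$ used here asks only that each $\alpha_n$ be a unital $*$-homomorphism satisfying the coaction identity and the Podle\'s condition, compatibly with the inclusions $i_{m,n}$. Preservation of the uniform trace is \emph{not} a hypothesis, and it is not ``built into'' the quantum automorphism group convention for the commutative algebras $C(X^n)$: in \cite{Wang1998} a distinguished state only enters for noncommutative finite quantum spaces, and \cite[Theorem~3.1]{Wang1998} derives the magic-unitary relations for an arbitrary coaction on an $n$-point space from the Podle\'s condition alone. Indeed, $\sum_u b^{(n)}_{u,v}=1$ is \emph{equivalent} to preservation of the uniform trace, so your argument assumes exactly what it needs to prove. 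The missing summation has to be extracted from the Podle\'s condition, for instance by citing \cite[Theorem~3.1]{Wang1998} as the paper does: the coefficient matrix of $\alpha_n$ is a corepresentation whose transpose is invertible, and for a matrix of projections one of whose families of sums is already $1$, this forces the other family of sums to be $1$ as well. Once both level-$n$ summation identities are available, your plan for the remaining cross-level relation does go through; the paper runs essentially the same argument via the inequality $b_{ux,vy}\le b_{u,v}$ obtained from tree compatibility.

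A second, mechanical but genuine, problem is that your index convention is transposed against the comultiplication fixed in Theorem~\ref{thm:AXisaCQG}. With $\gamma_n(p_v)=\sum_u a_{u,v}\otimes p_u$ and $\Delta(a_{u,v})=\sum_w a_{u,w}\otimes a_{w,v}$, the coefficient of $p_t$ in $(\Delta\otimes\id)\gamma_n(p_v)$ is $\sum_w a_{t,w}\otimes a_{w,v}$, while in $(\id\otimes\gamma_n)\gamma_n(p_v)$ it is $\sum_w a_{w,v}\otimes a_{t,w}$; these differ by the tensor flip and are not equal in the noncommutative algebra $\A_X$, so the coaction identity fails as written. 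The same flip makes $\pi(a_{u,v})=b^{(n)}_{u,v}$ intertwine $\Delta$ with the co-opposite of $\Phi$ rather than with $\Phi$. Everything is repaired by transposing throughout, i.e.\ taking $\gamma_n(p_u)=\sum_v a_{u,v}\otimes p_v$ and expanding $\alpha_n(p_u)=\sum_v b^{(n)}_{u,v}\otimes p_v$, which is the convention the paper uses. Your remaining verifications (automatic orthogonality of a partition of unity by projections, the Podle\'s condition for $\gamma_n$, tree compatibility from iterating (\ref{qaut:sum}), and reading the morphism property off the coaction identity) are fine once the indices are fixed.
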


\begin{proof}
For any $n\geq 1$, the elements
\[
 q_w := \sum_{w' \in X^n} a_{w,w'} \otimes p_{w'} \in \mathbb{A}_X \otimes C(X^n)
\]
for each $w\in X^n$ are mutually orthogonal projections and satisfy
\[
 \sum_{w\in X^n} q_w = \sum_{w,w'\in X^n} a_{w,w'} \otimes p_{w'} = 1\otimes 1
\]

Therefore there is a unital $*$-homomorphism $\gamma_n : C(X^n) \to \mathbb{A}_X \otimes C(X^n)$ satisfying $\gamma_n(p_w) = q_w$.
We have
\begin{align*}
(\Delta\otimes\id)\gamma_n(p_w) &= \sum_{w'\in X^n} \Delta(a_{w,w'}) \otimes p_{w'} \\
&= \sum_{w',z\in X^n} a_{w,z} \otimes a_{z,w'} \otimes p_w' \\
&= \sum_{z\in X^n} a_{w,z} \otimes \alpha_n(p_z) \\
&= (\id\otimes \gamma_n)\gamma_n(p_w),
\end{align*}
and so each $\gamma_n$ satisfies the coaction identity.

For a fixed $v\in X^n$ we have
\[
 \sum_{u\in X^n} \gamma_n(p_u)(a_{u,v}\otimes 1) = \sum_{u,w\in X^n}a_{u,w}a_{u,v} \otimes p_w = \sum_{u\in X^n} a_{u,v} \otimes p_v = 1 \otimes p_v.
\]
Multiplying by any element $a \otimes 1 \in \mathbb{A}_X \otimes 1$ shows that $\gamma_n(C(X^n))(\mathbb{A}_X\otimes 1)$ contains the elements $a \otimes p_v$ of $\mathbb{A}_X \otimes C(X^n)$ and hence the required density is satisfied.

Finally, fix $m < n$ and $w\in X^m$. Then
\begin{align*}
(\id\otimes i_{m,n})\gamma_m(p_w) &= \sum_{z\in X^m} a_{w,z} \otimes i_{m,n}(p_z) \\
&= \sum_{z\in X^m} \sum_{z'\in X^{n-m}} a_{w,z} \otimes p_{zz'} \\
&= \sum_{z\in X^m} \sum_{z'\in X^{n-m}} \sum_{w'\in X^{n-m}} a_{ww',zz'} \otimes p_{zz'} \\
&= \sum_{w'\in X^{n-m}} \alpha_n(p_{ww'}) \\
&= \gamma_n(i_{m,n}(p_w)),
\end{align*}
and so the collection $\gamma = (\gamma_n)_{n=1}^\infty$ defines an action of $(\mathbb{A}_X,\Delta)$ on the homogeneous rooted tree $X^*$.

Now suppose $(\alpha_n)_{n=1}^\infty$ is an action of a compact quantum group $(A,\Phi)$ on $X^*$. Let $b_{\varnothing,\varnothing} := 1 \in A$ and for $n \geq 1$ and $u,v\in X^n$ define $b_{u,v} \in A$ to be the unique elements satisfying
\[
 \alpha_n(p_u) = \sum_{v\in X^n} b_{u,v} \otimes p_v.
\]
The coaction identity for $\alpha_n$ says that
\begin{equation}
  \Phi(b_{u,v}) = \sum_{w\in X^n} b_{u,w} \otimes b_{w,v} \label{eq:comultforb}
\end{equation} 
for any $u,v\in X^n$.

We claim that the collection $\{b_{u,v}: u,v\in X^n, n\geq 0\} \subseteq A$ satisfies Definition \ref{def:qaut}. Condition (\ref{qaut:identity}) is by definition. For (\ref{qaut:proj}) and (\ref{qaut:sum}), we appeal to the universal property of the quantum permutation groups $A_s(|X|^n)$ for $n\geq 1$. Since for any $n\geq 1$, $\alpha_n$ defines a coaction of $(A,\Phi)$ on $C(X^n)$, \cite[Theorem 3.1]{Wang1998} says that the elements $\{b_{u,v} : u,v\in X^n\}$ satisfy conditions (3.1)--(3.3) of \cite[Section 3]{Wang1998}. Condition (3.1) is precisely (\ref{qaut:proj}). Conditions (3.1) and (3.2) say that for any $v\in X^n$ we have
\[
 \sum_{u\in X^n} b_{u,v} = 1_A = \sum_{w\in X^n} b_{v,w}.
\]
For any $u\in X^n$ and $x\in X$ we have
\[
 p_{ux} \leq \sum_{y\in X} p_{uy} = i_{n,n+1}(p_u),
\]
and hence
\begin{align*}
 \sum_{v\in X^n} \sum_{y\in X} b_{ux,vy}\otimes p_{vy} &= \alpha_{n+1}(p_{ux}) \\
 &\leq \alpha_{n+1}(i_{n,n+1}(p_u)) \\
 &= (\id_A\otimes i_{n,n+1})\alpha_n(p_u) \\
 &= \sum_{v\in X^n} \sum_{y\in X} b_{u,v}\otimes p_{vy}.
\end{align*}
It follows that $b_{ux,vy} \leq b_{u,v}$ for any $x,y\in X$. Therefore, for any $u,v\in X^n$ and $x\in X$ we have
\[
 b_{u,v} = b_{u,v}\left( \sum_{w\in X^n}\sum_{y\in X} b_{ux,wy} \right)= \sum_{y\in X} b_{ux,vy}.
\]
Likewise for any $y\in X$ we have $b_{u,v} = \sum_{x\in X} b_{ux,vy}$ and (\ref{qaut:sum}) holds.

Therefore, the universal property of $\mathbb{A}_X$ provides a homomorphism $\pi : \mathbb{A}_X \to A$ satisfying $\pi(a_{u,v}) = b_{u,v}$. It follows from \eqref{eq:comultforb} that $(\pi\otimes\pi)\circ\Delta = \Phi\otimes\pi$ and so $\pi$ is a compact quantum group homomorphism. The identity $(\pi\otimes\id)\circ\gamma_n = \alpha_n$ is immediate.
\end{proof}
	
\begin{prop} For $|X| \geq 2$ the $C^*$-algebra $\mathbb{A}_X$ is non-commutative and infinite dimensional.
\end{prop}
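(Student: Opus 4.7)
The plan is to handle infinite-dimensionality and non-commutativity separately, in both cases appealing to the universal property of $\A_X$ established in Proposition~\ref{prop:treeauts}.

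For infinite-dimensionality I would pass to the abelianisation. The commutator ideal of $\A_X$ is easily checked to be a Woronowicz ideal, so the abelianisation $\A_X^{\mathrm{ab}}$ is itself a compact quantum group equipped with coactions on each $C(X^n)$ inherited from those of $\A_X$. By Gelfand duality it has the form $C(G)$ for some compact group $G$, and the universal property from Proposition~\ref{prop:treeauts}, restricted to commutative targets, forces $G\cong\Aut(X^*)$ with the permutation topology. Since $\Aut(X^*)$ surjects onto the iterated wreath product $\Sym(X)^{\wr n}$ for every $n\geq 1$ via restriction to $X^{[n]}$, and these finite groups have orders growing unboundedly in $n$ whenever $|X|\geq 2$, the group $\Aut(X^*)$ is infinite. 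Hence $C(\Aut(X^*))$ is infinite-dimensional, and so is $\A_X$.

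For non-commutativity the strategy is to exhibit $\A_X$ surjecting onto a known non-commutative compact quantum group, again via Proposition~\ref{prop:treeauts}. When $|X|\geq 4$ this is straightforward using Wang's quantum permutation group $A_s(|X|)$ acting on $X$ and extended to $X^*$ by acting on the first letter only: if $(b_{x,y})_{x,y\in X}$ denotes the magic unitary of $A_s(|X|)$, then $\alpha_n(p_{x_1\cdots x_n}) := \sum_{y_1} b_{x_1,y_1}\otimes p_{y_1 x_2\cdots x_n}$ defines a coaction of $A_s(|X|)$ on $C(X^n)$ that is visibly compatible with the tree inclusions. Proposition~\ref{prop:treeauts} then yields a surjective morphism $\A_X\to A_s(|X|)$, and Wang's theorem~\cite{Wang1998} that $A_s(n)$ is non-commutative for $n\geq 4$ transfers non-commutativity back to $\A_X$.

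The cases $|X|=2$ and $|X|=3$ require more care, because $A_s(2)$ and $A_s(3)$ are commutative. My plan is to repeat the argument one level deeper, replacing $A_s(|X|)$ by Bichon's free wreath product $A_s(|X|)\ast_w A_s(|X|)$ from~\cite{Bichon04}. This compact quantum group acts naturally on $X^{[2]}$ in a way that respects the tree partition: the outer magic unitary permutes the two depth-one subtrees of the root, while the two inner copies of $A_s(|X|)$ act on those subtrees. Extending this action trivially past depth two produces a compatible family of coactions on all $C(X^n)$, and Proposition~\ref{prop:treeauts} then furnishes a surjection $\A_X\to A_s(|X|)\ast_w A_s(|X|)$. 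The main obstacle is verifying that this free wreath product is genuinely non-commutative for $|X|\in\{2,3\}$, despite both factors being commutative; this follows from Bichon's construction, in which the defining free relations prevent the outer magic unitary and the inner copies from jointly lying in any commutative $C^*$-algebra.
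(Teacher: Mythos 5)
Your treatment of infinite-dimensionality is fine and is a legitimately different route from the paper's: you pass to the abelianisation $C(\Aut(X^*))$ (as in Remark~\ref{rmk:classicalautgroup}, the indicator functions $f_{u,v}$ satisfy the relations of Definition~\ref{def:qaut}, giving a surjection $\A_X\to C(\Aut(X^*))$, and $\Aut(X^*)$ is infinite for $|X|\geq 2$). The surjection onto $A_s(|X|)$ likewise disposes of non-commutativity for $|X|\geq 4$. The paper instead handles everything at once, for all $|X|\geq 2$, with a single explicit surjection $\A_X\to B$ where $B$ is the universal $C^*$-algebra on two projections, $B\cong C^*(\Z_2*\Z_2)$ \cite{RaeburnSinclair89}, which is simultaneously non-commutative and infinite-dimensional.

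The genuine gap is in the only case that actually matters, $|X|\in\{2,3\}$, where you reduce non-commutativity of $\A_X$ to non-commutativity of $A_s(|X|)\ast_w A_s(|X|)$ and then assert that ``the defining free relations prevent the outer magic unitary and the inner copies from jointly lying in any commutative $C^*$-algebra.'' That is not an argument: imposing only free relations never \emph{forbids} a commutative quotient (the abelianisation always exists); to prove a universal $C^*$-algebra is non-commutative you must \emph{exhibit} a representation in which two specified elements fail to commute. Here the relevant pair is $\nu_0(a)$ and $\nu_1(a')$ for the two inner copies --- the wreath relations only force $\nu_x(a)$ to commute with the $a_{x,y}$, not the inner copies with each other --- and producing a representation where $\nu_0$ and $\nu_1$ land on two non-commuting projections is precisely the content of the paper's explicit $4\times 4$ matrix over $B$. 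So your reduction through the free wreath product is a detour that, when the missing step is filled in, collapses to the paper's direct construction; as written, the key verification is deferred rather than proved. (A secondary, smaller issue: the compatibility of your depth-two action with the inclusions $i_{m,n}$, i.e.\ that ``extending trivially past depth two'' really yields an action of the tree in the sense of the paper, is also left unchecked, though it is routine and is exactly what the paper verifies for its matrix via $b_{uw,vw'}=\delta_{w,w'}b_{u,v}$.)
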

	
	\begin{proof} Without loss of generality, assume $X=\{0,1\}$. Let $B$ be the universal unital $C^*$-algebra generated by two (non-commuting) projections $p$ and $q$. It is known from \cite{RaeburnSinclair89} that $B \cong C^*(\mathbb{Z}_2*\mathbb{Z}_2)$, which is non-commutative and infinite dimensional. Define the matrix
\[
(b_{u,v})_{u,v\in X^2} = 
\begin{pmatrix}
p & 1_B-p & 0 & 0 \\
1_B-p & p & 0 & 0 \\ 
0 & 0 & q & 1_B-q \\
0 & 0 & 1_B-q & q \\
\end{pmatrix} \in M_4(B).
\]
Define $b_{\varnothing,\varnothing} = b_{0,0} = b_{1,1} = 1_B$, $b_{0,1} = b_{1,0} = 0$ and for $u,v\in X^2$ and $w,w' \in X^*$ define $b_{uw,vw'}:= \delta_{w,w'}b_{u,v}$. Then these elements satisfy the relations in Definition~\ref{def:qaut} and hence there is a surjective homomorphism $\A_X \to B$. Since $B$ is non-commutative and infinite-dimensional so is $\A_X$.

\end{proof}
	
\begin{remark} \label{rmk:classicalautgroup} The group $\Aut(X^*)$ of automorphisms of a homogeneous rooted tree $X^*$ is compact totally disconnected Hausdorff group under the permutation topology. A neighbourhood basis of the identity is given by the family of subgroups 
	\[
	\{G_u:=\{g\in\Aut(X^*): g\cdot u=u\}: u\in X^*\},
	\]
	and since the orbit of any $u\in X^*$ is finite, each of these open subgroups is closed and hence compact. Cosets of these subgroups are of the form $G_{u,v}:=\{g\in G: g\cdot v=u\}$. Then $\{G_{u,v}:u,v\in X^*\}$ is a basis of compact open sets for the topology on $\Aut(X^*)$. It follows that  the indicator functions $f_{u,v} := 1_{G_{u,v}}$ span a dense subset of $C(\Aut(X^*))$. It is easily checked that the elements $f_{u,v}$ satisfy \eqref{qaut:identity}--\eqref{qaut:sum} of Definition \ref{def:qaut} and the universal property of $C(\Aut(X^*))$ then implies that it is the abelianisation of $\A_X$.
\end{remark}

	\section{Self-similarity}\label{sec:selfsimilarity}
	If $g\in \Aut(X^*)$ and $x\in X$, the \textit{restriction} $g|_x$ is the unique element of $\Aut(X^*)$ satisfying
	\[
	g\cdot (xw) = (g\cdot x)g|_x\cdot w\quad\text{for all $w\in X^*$}.
	\]
A subgroup $G \leq \Aut(X^*)$ is called \emph{self-similar} if $G$ is closed under taking restrictions. That is, whenever $g\in G$ and $x\in X$, the restriction $g|_x$ is an element of $G$. With the topology inherited from $\Aut(X^*)$, the restriction map $G\to G\colon g\mapsto g|_x$ is continuous. If $G$ is any group acting on $X^*$ by automorphisms, we call the action \emph{self-similar} if the image of $G$ in $\Aut(X^*)$ is self-similar.

To have a reasonable notion of self-similarity for quantum subgroups of $\mathbb{A}_X$, we need to understand how restriction manifests itself in the function algebra $C(\Aut(X^*))$. Given $x\in X$ and $u,v\in X^n$ we have
\begin{align*}
	\{g:g|_x\cdot u = v\} &= \left(\bigcup_{y\in X} \{g:g\cdot x = y\}\right) \cap \{g:g|_x\cdot u = v\} \\
	&= \bigcup_{y\in X} \left(\{g:g\cdot x = y\}\cap \{g:g|_x\cdot u = v\}\right) \\
	&= \bigcup_{y\in X} \{g:g\cdot (xu) = yv\},
\end{align*}
and hence the corresponding indicator functions satisfy
\[
1_{\{g:g|_x\cdot u = v\}} = \sum_{y\in X} 1_{\{g:g\cdot (xu) = yv\}}.
\]
This formula motivates the following result.


\begin{prop}\label{prop:restrictions}
	For each $x\in X$ there is a homomorphism $\rho_x : \mathbb{A}_X \to \mathbb{A}_X$ satisfying
	\begin{equation} \label{restriction}
		\rho_x(a_{u,v}) = \sum_{y\in X} a_{yu,xv},
	\end{equation}
	for all $u,v\in X^n$.
\end{prop}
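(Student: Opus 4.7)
The plan is to apply the universal property of $\A_X$ from Definition~\ref{def:qaut}. Define, for each $u,v \in X^n$ and $n \geq 0$, the elements
\[
 b_{u,v} := \sum_{y \in X} a_{yu, xv} \in \A_X,
\]
and show that the collection $\{b_{u,v} : u,v \in X^n, n \geq 0\}$ satisfies relations \eqref{qaut:identity}--\eqref{qaut:sum} of Definition~\ref{def:qaut}. Once this is established the universal property immediately provides a $*$-homomorphism $\rho_x : \A_X \to \A_X$ with $\rho_x(a_{u,v}) = b_{u,v}$.

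For \eqref{qaut:identity}, we need $b_{\varnothing,\varnothing} = 1$. Applying relation~\eqref{qaut:sum} of Definition~\ref{def:qaut} at the $n = 0$ level (with $u = v = \varnothing$ and the letter $x$) gives $1 = a_{\varnothing,\varnothing} = \sum_{y \in X} a_{y, x}$, which is precisely $b_{\varnothing,\varnothing}$. For \eqref{qaut:proj}, self-adjointness is immediate from $a_{yu,xv}^* = a_{yu,xv}$, and for idempotence I would expand
\[
 b_{u,v}^2 = \sum_{y, y' \in X} a_{yu,xv}\, a_{y'u, xv}
\]
and use the orthogonality identity recorded in Remarks~\ref{rmks:aboutAX}(iii): since $y \neq y'$ forces the words $yu$ and $y'u$ in $X^{n+1}$ to differ, the cross terms vanish, leaving $b_{u,v}^2 = \sum_y a_{yu,xv} = b_{u,v}$.

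For the magic-square relation \eqref{qaut:sum}, fix $u, v \in X^n$ and $t \in X$. Apply relation~\eqref{qaut:sum} inside $\A_X$ to each summand $a_{yu, xv}$, using $t$ as the added letter: this yields both $a_{yu, xv} = \sum_{s \in X} a_{yut, xvs}$ and $a_{yu, xv} = \sum_{s \in X} a_{yus, xvt}$. Summing over $y \in X$ and rearranging gives $b_{u,v} = \sum_{s \in X} b_{ut, vs}$ and $b_{u,v} = \sum_{s \in X} b_{us, vt}$, which are precisely the two required row and column sums.

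The verification is essentially routine, so there is no serious obstacle; the only point requiring care is keeping the indexing consistent (the fixed letter $x$ from $\rho_x$ is used only in the second coordinate, and the summation letter $y$ is added in the first coordinate, so relations applied inside $\A_X$ must be invoked in the correct slot). With all three relations verified, the universal property of $\A_X$ produces the desired homomorphism $\rho_x$.
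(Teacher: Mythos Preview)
Your proof is correct and follows essentially the same approach as the paper: define the candidate images $b_{u,v} = \sum_{y\in X} a_{yu,xv}$, verify that they satisfy relations \eqref{qaut:identity}--\eqref{qaut:sum} of Definition~\ref{def:qaut}, and invoke the universal property of $\A_X$. The individual verifications match the paper's almost line for line, the only cosmetic difference being that you explicitly cite Remarks~\ref{rmks:aboutAX}(iii) for the orthogonality used in the idempotence step, whereas the paper uses it tacitly.
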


We illustrate the formula for a restriction map in Figure 1 by considering $X=\{0,1,2\}$ and looking at what the restriction map $\rho_1$ does to the projection $a_{1,2}$. 

\begin{figure}[H]
	\scalebox{0.75}{\begin{tikzpicture}[declare function={boxW=1.3cm;},box/.style={minimum size=boxW,draw}]  
			
			\foreach \x in {0,...,2}
			{
				\foreach \y in {0,...,2}
				{
					\ifnum\x=2
					\ifnum\y=1
					\node [box,fill=black] at ({\x*boxW}, {-\y*boxW}) {};
					\else
					\node [box] at ({\x*boxW}, {-\y*boxW}) {};
					\fi
					\else
					\node [box] at ({\x*boxW}, {-\y*boxW}) {};
					\fi
				}
			}
			\node [box, line width=1pt, minimum size = 3*boxW] at (boxW,-boxW) {};
		\end{tikzpicture}
	}
\raisebox{8ex}{\qquad$\mapsto$\qquad}
	\scalebox{0.25}{\begin{tikzpicture}[declare function={boxW=1.3cm;},box/.style={minimum size=boxW,draw}]  
			
			\foreach \x in {0,...,8}
			{
				\foreach \y in {0,...,8}
				{
					\node [box] at ({\x*boxW}, {-\y*boxW}) {};
				}
			}
			\node [box,fill=black] at ({5*boxW}, {-1*boxW}) {};
			\node [box,fill=black] at ({5*boxW}, {-4*boxW}) {};
			\node [box,fill=black] at ({5*boxW}, {-7*boxW}) {};
			
			\node [box, line width = 3pt,minimum size = 3*boxW] at (boxW,-boxW) {};
			\node [box, line width = 3pt,minimum size = 3*boxW] at (boxW,-4*boxW) {};
			\node [box, line width = 3pt,minimum size = 3*boxW] at (boxW,-7*boxW) {};
			\node [box, line width = 3pt,minimum size = 3*boxW] at (4*boxW,-boxW) {};
			\node [box, line width = 3pt,minimum size = 3*boxW] at (4*boxW,-4*boxW) {};
			\node [box, line width = 3pt,minimum size = 3*boxW] at (4*boxW,-7*boxW) {};
			\node [box, line width = 3pt,minimum size = 3*boxW] at (7*boxW,-boxW) {};
			\node [box, line width = 3pt,minimum size = 3*boxW] at (7*boxW,-4*boxW) {};
			\node [box, line width = 3pt,minimum size = 3*boxW] at (7*boxW,-7*boxW) {};
		\end{tikzpicture}
	}
	\caption{$\rho_1(a_{1,2}) = a_{01,12}+a_{11,12}+a_{21,12}$}
\end{figure}
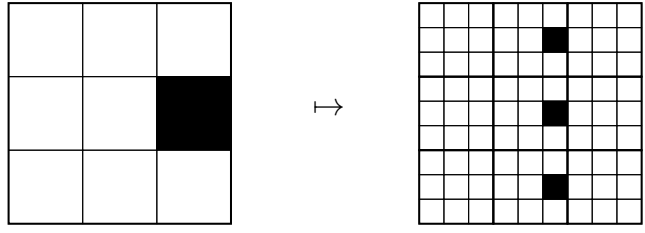

\begin{proof}[Proof of Proposition~\ref{prop:restrictions}]
	Fix $x\in X$. We show that the elements 
	\[
	\{b_{u,v}:=\rho_x(a_{u,v}) : u,v\in X^n, n\geq 1\}
	\]
	satisfy the conditions of Definition \ref{def:qaut}. For (\ref{qaut:identity}) we have
	\[
	b_{\varnothing,\varnothing} = \rho_x(a_{\varnothing,\varnothing}) = \sum_{y\in X} a_{y,x} = 1.
	\]
	For (\ref{qaut:proj}), we have
	\[
	b_{u,v}^* = \left(\sum_{y\in X} a_{yu,xv}\right)^* = \sum_{y\in X} a_{yu,xv}^* = \sum_{y\in X} a_{yu,xv} = b_{u,v}
	\]
	and
	\begin{align*}
		b_{u,v}^2 = \left(\sum_{y\in X} a_{yu,xv}\right)^2 = \sum_{y,z\in X} a_{yu,xv}a_{zu,xv} = \left(\sum_{y\in X} a_{yu,xv}\right) = b_{u,v}.
	\end{align*}
	For (\ref{qaut:sum}), fix $y\in X$. Then
	\[
	\sum_{z\in X} b_{uy,vz} = \sum_{z\in X} \sum_{w\in X} a_{wuy,xvz} = \sum_{w\in X} a_{wu,xv} = b_{u,v}.
	\]
	A similar calculation shows $\sum_{z\in X} b_{uz,vy} = b_{u,v}$. Hence there is a homomorphism $\rho_x$ with the desired formula. 
\end{proof}

\begin{remark}
	We define $\rho_\varnothing$ to be the identity homomorphism $\A_X\to\A_X$, and for $w=w_1\cdots w_n\in X^n$ we define $\rho_w$ to be the composition $\rho_{w_1}\circ \cdots\circ \rho_{w_n}$. A routine calculation shows that for all $u,v\in X^n$ we have 
	\[
	\rho_w(a_{u,v})=\sum_{z\in X^n}a_{zu,wv}.
	\]
\end{remark}

\begin{remark}\label{rem:thesigmamap}
	A similar argument to the one in the proof of Proposition~\ref{prop:restrictions} shows that for each $x\in X$ there is a homomorphism $\sigma_x\colon \A_X\to \A_X$ satisfying
	\[
	\sigma_x(a_{u,v})=\sum_{y\in X}a_{xu,yv}
	\]
	for all $u,v\in X^n$, $n\in \N$. It is straightforward to see that $\sigma_x=\kappa\circ\rho_x\circ \kappa$, where $\kappa$ is the coinverse. 
\end{remark}

We can now state the main definition of the paper.

\begin{definition}
	We call $\rho_w$ the \textit{restriction by} $w$. A quantum subgroup $A$ of $\mathbb{A}_X$ is \emph{self-similar} if for each $x\in X$ the restriction $\rho_x$ factors through the quotient map $q:\mathbb{A}_X \to A$; that is, if there exists a homomorphism $\widetilde{\rho_x} : A \to A$ such that the diagram
	\begin{center}
		\begin{tikzcd}
			\mathbb{A}_X \arrow[r, "\rho_x"] \arrow[d, "q"]
			& \mathbb{A}_X \arrow[d, "q"] \\
			A \arrow[r, "\widetilde{\rho_x}"]
			& A
		\end{tikzcd}
	\end{center}
	commutes.
\end{definition}

To motivate the main result of this section, let $G$ be a group. To construct a self-similar action of $G$ on $X^*$, it suffices to have a function $f\colon G\times X\to X\times G$ such that $f(e,x)=(x,e)$ for all $x\in X$, and such that the following diagram commutes
\begin{center}
	\begin{tikzcd}
		G\times G\times X \arrow[r, "m_G\times \id_X"] \arrow[d, "\id_G\times f"]
		& G\times X \arrow[dd, "f"] \\
		G\times X\times G \arrow[d, "f\times\id_G"] & \\
		X\times G\times G \arrow[r, "\id_X\times m_G"] & X\times G
	\end{tikzcd}
\end{center}
This data allows us to define an action of $G$ on $X^*$, which is self-similar with $g\cdot x$ and $g|_x$ the unique elements of $X$ and $G$ satisfying $(g\cdot x,g|_x):=f(g,x)$.

Our next result is a compact quantum group analogue of the above result. We will be working with multiple different identity homomorphisms and units. For clarity we adopt the following notational conventions: we write $\id_A$ for the identity homomorphism on a $C^*$-algebra $A$, and for $n\geq 1$ write $\id_n$ for the identity homomorphism on the commutative $C^*$-algebra $C(X^n)$. Likewise, $1_A$ will denote the unit of $A$, $1$ and $1_n$ will denote the units of $C(X)$ and $C(X^n)$ respectively.

\begin{thm} \label{thm:psi}
	Suppose $(A,\Phi)$ is a compact quantum group equipped with a unital $*$-homomorphism $\psi : C(X) \otimes A \to A \otimes C(X)$ satisfying
	\begin{equation} \label{eq:actionrestrictionhom}
		(\Phi\otimes\id_1)\psi = (\id_A\otimes\psi)(\psi\otimes\id_A)(\id_1\otimes\Phi)
	\end{equation}
	and
	\begin{equation} \label{eq:actionrestrictionhomdensity}
		\overline{\psi(C(X)\otimes 1_A)(A\otimes 1)} = A\otimes C(X).
	\end{equation}
	Then $(A,\Phi)$ acts on the homogeneous rooted tree $X^*$ and moreover the image of $\mathbb{A}_X$, under the homomorphism $\pi : \mathbb{A}_X \to A$ from Proposition \ref{prop:treeauts}, is a self-similar compact quantum group.
\end{thm}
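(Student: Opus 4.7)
The plan is to assemble, out of iterated applications of $\psi$, a compatible family of coactions $\alpha_n:C(X^n)\to A\otimes C(X^n)$ constituting an action of $(A,\Phi)$ on $X^*$, then invoke \proref{prop:treeauts} to obtain $\pi:\A_X\to A$, and finally use $\psi$ once more to build a $*$-homomorphism on $A$ through which each restriction $\rho_x$ factors.

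To construct the action, I would set $\alpha_1(f):=\psi(f\otimes 1_A)$; the coaction identity for $\alpha_1$ is immediate from \eqref{eq:actionrestrictionhom} and the Podle\'s density condition is precisely \eqref{eq:actionrestrictionhomdensity}. For $n\ge 2$, identifying $C(X^n)\cong C(X)\otimes C(X^{n-1})$, define recursively
\[
\alpha_n(f\otimes g):=(\psi\otimes \id_{n-1})\bigl(f\otimes \alpha_{n-1}(g)\bigr),\qquad f\in C(X),\ g\in C(X^{n-1}).
\]
Writing $\psi(p_x\otimes a)=\sum_{y\in X}\psi_{y,x}(a)\otimes p_y$ gives maps $\psi_{y,x}:A\to A$, each a $*$-homomorphism by multiplicativity of $\psi$ combined with the orthogonality of the $p_x$. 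Unwinding the recursion yields the explicit formula
\[
\alpha_n(p_{x_1\cdots x_n})=\sum_{y_1,\dots,y_n\in X}\psi_{y_1,x_1}\bigl(\psi_{y_2,x_2}(\cdots\psi_{y_n,x_n}(1_A)\cdots)\bigr)\otimes p_{y_1\cdots y_n}.
\]
I would then verify inductively that each $\alpha_n$ is a unital $*$-homomorphism satisfying the coaction identity, with the inductive step repeatedly invoking \eqref{eq:actionrestrictionhom} to commute $\Phi$ past the chain of $\psi$'s, and that the Podle\'s condition propagates from \eqref{eq:actionrestrictionhomdensity}. Compatibility with the injections $i_{m,n}$ reduces, by linearity of each $\psi_{y,x}$, to the identity $\sum_{x\in X}\psi_{y,x}(1_A)=1_A$, which is unitality of $\alpha_1$. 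This furnishes an action of $(A,\Phi)$ on $X^*$, and \proref{prop:treeauts} then delivers $\pi:\A_X\to A$ with $\pi(a_{u,v})=b_{u,v}$, where $\alpha_n(p_u)=\sum_{v}b_{u,v}\otimes p_v$.

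For self-similarity, fix $x\in X$ and define $\widetilde{\rho_x}:A\to A$ by
\[
\widetilde{\rho_x}(a):=(\id_A\otimes \chi_x)\bigl(\psi(1\otimes a)\bigr),
\]
where $\chi_x:C(X)\to\C$ is the character $p_y\mapsto \delta_{x,y}$. Since $\psi$ and $\id_A\otimes \chi_x$ are $*$-homomorphisms, so is $\widetilde{\rho_x}$. The explicit formula for $\alpha_n$ yields the key identity $b_{yu,xv}=\psi_{x,y}(b_{u,v})$ for $u,v\in X^n$ and $x,y\in X$. A direct computation using the definitions then gives
\[
\widetilde{\rho_x}(b_{u,v})=\sum_{y\in X}\psi_{x,y}(b_{u,v})=\sum_{y\in X}b_{yu,xv}=\pi(\rho_x(a_{u,v})),
\]
so $\widetilde{\rho_x}\circ \pi=\pi\circ \rho_x$ on generators of $\A_X$ and hence on all of $\A_X$. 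Consequently $\widetilde{\rho_x}$ restricts to a $*$-endomorphism of $\pi(\A_X)$, which is a quantum subgroup of $\A_X$ as the quotient by the Woronowicz ideal $\ker\pi$, and the factorisation required for self-similarity holds.

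The main obstacle I anticipate is the inductive verification of the coaction identity for $\alpha_n$ when $n\ge 2$: commuting $\Phi$ past an $n$-fold iterate of $\psi$ demands careful tensor-leg bookkeeping driven by \eqref{eq:actionrestrictionhom} at each step. Once the action is in place, the construction of $\widetilde{\rho_x}$ and the algebraic check that the restrictions descend to $\pi(\A_X)$ are essentially direct.
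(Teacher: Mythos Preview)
Your proposal is correct and follows essentially the same route as the paper: define $\alpha_1=\psi|_{C(X)\otimes 1_A}$, iterate via $\alpha_{n+1}=(\psi\otimes\id_n)(\id_1\otimes\alpha_n)$, verify this is an action, apply \proref{prop:treeauts}, and read off $\widetilde{\rho_x}$ from $\psi(1\otimes -)$; your introduction of the component maps $\psi_{y,x}$ and the resulting closed formula for $b_{u,v}$ is a convenient repackaging rather than a different argument. One caution: the step you pass over most quickly, the inductive propagation of the Podle\'{s} condition, is in fact the longest and most delicate part of the paper's proof (requiring several nested approximations), whereas the coaction identity you flag as the main obstacle is comparatively short.
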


\begin{proof}
	We begin by defining an action of $(A,\Phi)$ on $X^*$. Identify $C(X)$ with $C(X)\otimes 1_A \subseteq C(X) \otimes A$ and let $\alpha_1 := \psi|_{C(X)\otimes 1_A}$. Then $\alpha_1$ is clearly unital and the coaction identity and Podle\' s condition for $\alpha_1$ follow from \eqref{eq:actionrestrictionhom} and \eqref{eq:actionrestrictionhomdensity}. Now inductively define $\alpha_{n+1} := (\psi\otimes\id_n)(\id_1\otimes\alpha_n) : C(X^{n+1}) \to A \otimes C(X^{n+1})$ for $n\geq 1$, where we are supressing the canonical isomorphism $C(X^{n+1}) \cong C(X)\otimes C(X^n)$. Again, $\alpha_{n+1}$ is clearly unital whenever $\alpha_n$ is. If we assume $\alpha_n$ satisfies the coaction identity, then
	\begin{align*}
		(\Phi\otimes\id_{n+1})\alpha_{n+1} &= (\Phi\otimes\id_{n+1})(\psi\otimes\id_n)(\id_1\otimes\alpha_n) \\
		&= (\id_A\otimes\psi\otimes\id_n)(\psi\otimes\id_A\otimes\id_n)(\id_1\otimes\Phi\otimes\id_n)(\id_1\otimes\alpha_n) \\
		&= (\id_A\otimes\psi\otimes\id_n)(\psi\otimes\id_A\otimes\id_n)(\id_1\otimes\id_A\otimes\alpha_n)(\id_1\otimes\alpha_n) \\
		&= (\id_A\otimes\psi\otimes\id_n)(\id_A\otimes\id_1\otimes\alpha_n)(\psi\otimes\id_n)(\id_1\otimes\alpha_n) \\
		&= (\id_A\otimes\alpha_{n+1})\alpha_{n+1},
	\end{align*}
	and so $\alpha_{n+1}$ also satisfies the coaction identity. Since $\alpha_1$ is a coaction, we see that $\alpha_n$ satisfies the coaction identity for any $n\geq 1$. 
	
	To see that each $\alpha_n$ satisfies the Podle\' s condition, we argue by induction. We know it is satisfied for $n=1$. Suppose for some $n\geq 1$ that
	\[
	\overline{\alpha_n(C(X^n))(A\otimes 1_n)} = A \otimes C(X^n).
	\]
	Fix a spanning element $a\otimes p_u \otimes p_x \in A \otimes C(X^{n+1})$ where $u\in X^n$ and $x\in X$. By the inductive hypothesis we can approximate
	\[
	a\otimes p_u \sim \sum_i \alpha_n(f_i)(a_i\otimes 1_n),
	\]
	where $f_i \in C(X^n)$ and $a_i\in A$. Then
	\begin{equation} \label{eq:actiondensityapprox}
		a\otimes p_u\otimes p_x \sim \sum_i (\alpha_n(f_i)\otimes 1)(1_A\otimes 1_n \otimes p_x)(a_i\otimes 1_{n+1}).
	\end{equation}
	By definition of $\alpha_n$, for any $f\in C(X^n)$ we have
	\begin{align*}
		\alpha_n(f)\otimes 1 &= ((\psi\otimes\id_{n-1})\dots(\id_{n-1}\otimes\psi)(f\otimes 1_A))\otimes 1 \\
		&= (\psi\otimes\id_n)\dots(\id_{n-1}\otimes\psi\otimes \id_1)(f\otimes 1_A\otimes 1) \\
		&= (\psi\otimes\id_n)\dots(\id_{n-1}\otimes\psi\otimes \id_1)(\id_n\otimes\psi)(f\otimes 1\otimes 1_A) \\
		&= \alpha_{n+1}(f\otimes 1).
	\end{align*}
	So we can write \eqref{eq:actiondensityapprox} as
	\[
	\sum_i \alpha_{n+1}(f_i\otimes 1)(1_A\otimes 1_n \otimes p_x)(a_i\otimes 1_{n+1}).
	\]
	Since $\psi$ is unital, we have
	\[
	1_A\otimes 1_n \otimes p_x = (\psi\otimes\id_n)(1 \otimes 1_A \otimes 1_{n-1}\otimes p_x),
	\]
	which can be approximated using the induction hypothesis by
	\begin{align*}
		(\psi\otimes\id_n)(1 \otimes 1_A \otimes 1_{n-1}\otimes p_x) &\sim (\psi\otimes\id_n)\left(1\otimes\sum_j \alpha_n(g_j)(b_j\otimes 1_n)\right) \\
		&= (\psi\otimes\id_n)\left(\sum_j(\id_1\otimes \alpha_n)(1\otimes g_j)(1\otimes b_j\otimes 1_n)\right) \\
		&= \sum_j \alpha_{n+1}(1\otimes g_j) (\psi\otimes \id_n)(1\otimes b_j\otimes 1_n).
	\end{align*}
	Finally, applying the Podle\' s condition for $\alpha_1$ we can approximate 
	\[
	\psi(1\otimes b_j) \sim \sum_k \alpha_1(h_k)(c_k\otimes 1) = \sum_k \psi(h_k\otimes 1_A)(c_k\otimes 1),
	\]
	so
	\begin{align*}
		(\psi\otimes \id_n)(1\otimes b_j\otimes 1_n) &\sim \sum_k (\psi(h_k\otimes 1_A)\otimes 1_n)(c_k\otimes 1_{n+1}) \\
		&= \sum_k \alpha_{n+1}(h_k\otimes 1_n)(c_k\otimes 1_{n+1}).
	\end{align*}
	Combining these approximations we can write
	\[
	a\otimes p_u\otimes p_x \sim \sum_{i,j,k} \alpha_{n+1}((f_i\otimes 1)(h_k\otimes g_j))(c_k a_i \otimes 1_{n+1}),
	\]
	where $f_i,g_j \in C(X^n), h_k \in C(X)$ and $a_i, c_k \in A$. Thus $\alpha_{n+1}$ satisfies the Podle\' s condition and so by induction $\alpha_n$ satisfies the Podle\' s condition for every $n\geq 1$. 
	
	It remains to show that $\alpha_n\circ i_{m,n} = (\id_A\otimes i_{m,n})\circ\alpha_m$ for any $m<n$. As in the proof of Proposition \ref{prop:treeauts} , for any $n \geq 1$ and $u,v\in X^n$ we will let $b_{u,v} \in A$ be the unique elements satifsying
	\[
	\alpha_n(p_u) = \sum_{v\in X^n} b_{u,v}\otimes p_v.
	\]
	We know from the same proof that for any $n\geq 1$ and $v\in X^n$ we have
	\[
	\sum_{u\in X^n} b_{u,v} = 1_A.
	\]
	If $m < n$, for any $u\in X^m$ we have
	
	\begin{align*}
		\alpha_n\circ i_{m,n}(p_u) &= (\psi\otimes\id_{n-1})\dots(\id_{m-1}\otimes\psi\otimes\id_{n-m})(\id_m\otimes\alpha_{n-m})(i_{m,n}(p_u))\\
		&= \sum_{w\in X^{n-m}}(\psi\otimes\id_{n-1})\dots(\id_{m-1}\otimes\psi\otimes\id_{n-m})(\id_m\otimes\alpha_{n-m})(p_u\otimes p_w) \\
		&= \sum_{w,z\in X^{n-m}}(\psi\otimes\id_{n-1})\dots(\id_{m-1}\otimes\psi\otimes\id_{n-m})(p_u\otimes b_{w,z}\otimes p_z) \\
		&= \sum_{z\in X^{n-m}}(\psi\otimes\id_{m-1})\dots(\id_{m-1}\otimes\psi)\left(p_u\otimes \sum_{w\in X^{n-m}} b_{w,z}\right)\otimes p_z \\
		&= \sum_{z\in X^{n-m}}(\psi\otimes\id_{m-1})\dots(\id_{m-1}\otimes\psi)(p_u\otimes 1_A )\otimes p_z \\
		&= \sum_{z\in X^{n-m}}\alpha_m(p_u)\otimes p_z \\
		&= \sum_{v\in X^m} b_{u,v} \otimes \sum_{z\in X^{n-m}} p_v\otimes p_z \\
		&= \sum_{v\in X^m} b_{u,v} \otimes i_{m,n}(p_v) \\
		&= (\id_A\otimes i_{m,n})\circ\alpha_m(p_u).
	\end{align*}
	So we have that $(\alpha_n)_{n=1}^\infty$ defines an action of $(A,\Phi)$ on $X^*$.
	
	Finally, let $\pi : \A_X \to A$ be the homomorphism from Proposition \ref{prop:treeauts}. We have $\pi(a_{u,v}) = b_{u,v}$ for any $u,v\in X^n$ and $n\geq 1$. For each $x\in X$ define a homomorphism $\tilde{\rho_x} : A\to A$ by
	\[
	\psi(1\otimes a) = \sum_{x\in X} \tilde{\rho_x}(a) \otimes p_x,
	\]
	where $a\in A$. For any $u\in X^n$ we have
	\[
	\alpha_{n+1}(1\otimes p_u) = \sum_{y\in X} \alpha_{n+1}(p_{yu}) = \sum_{v\in X^n} \sum_{x,y\in X} b_{yu,xv} \otimes p_x \otimes p_v.
	\]
	On the other hand, we know $\alpha_{n+1} = (\psi\otimes \id_n)(\id_1\otimes \alpha_n)$ and
	\[
	(\psi\otimes \id_n)(\id_1\otimes \alpha_n)(1\otimes p_u) = \sum_{v\in X^n} \psi(1\otimes b_{u,v}) \otimes p_v = \sum_{v\in X^n} \sum_{x\in X} \tilde{\rho_x}(b_{u,v})\otimes p_x \otimes p_v,
	\]
	and by comparing tensor factors we see that $\tilde{\rho_x}(b_{u,v}) = \sum_{y\in X} b_{yu,xv}$. Hence, the diagram
	\begin{center}
		\begin{tikzcd}
			\mathbb{A}_X \arrow[r, "\rho_x"] \arrow[d, "\pi"]
			& \mathbb{A}_X \arrow[d, "\pi"] \\
			\pi(\A_X) \arrow[r, "\tilde{\rho_x}"]
			& \pi(\A_X)
		\end{tikzcd}
	\end{center}
	commutes, and so $\pi(\A_X) \subseteq A$ is a self-similar quantum group.
\end{proof}

\begin{prop}
	The following are equivalent
	\begin{enumerate}
		\item $(A,\Phi)$ is a quantum self-similar group, and
		\item $(A,\Phi)$ is a quantum subgroup of $(\A_X,\Delta)$ and there is a homomorphism $\psi : C(X) \otimes A \to A \otimes C(X)$ satisfying the hypotheses of Theorem~\ref{thm:psi}.
	\end{enumerate}
\end{prop}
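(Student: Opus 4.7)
My plan is to establish the two implications separately. The implication (2) $\Rightarrow$ (1) is essentially a direct invocation of Theorem~\ref{thm:psi}: starting from a $\psi$ satisfying \eqref{eq:actionrestrictionhom} and \eqref{eq:actionrestrictionhomdensity}, the theorem produces a quantum group homomorphism $\pi : \A_X \to A$ whose image is a self-similar quantum subgroup of $\A_X$, with restrictions $\widetilde{\rho_x}$ extracted from $\psi$ via $\psi(1 \otimes a) = \sum_{x \in X} \widetilde{\rho_x}(a) \otimes p_x$. Under the compatibility identifying $\pi$ with the quotient map $q : \A_X \to A$ witnessing $A$ as a quantum subgroup --- which follows from the uniqueness in Proposition~\ref{prop:treeauts} as soon as one checks that $\psi|_{C(X) \otimes 1_A} = (q \otimes \id_1)\gamma_1$ --- we obtain $\pi(\A_X) = A$, and hence $A$ is self-similar.

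For the harder direction (1) $\Rightarrow$ (2), I must construct $\psi$ from the self-similarity data $(q, \{\widetilde{\rho_x}\}_{x\in X})$. My plan is to present $\psi$ as a product of two commuting unital $*$-homomorphisms using the universal property of $C(X) \otimes A$. Specifically, I would define $\phi_1 : C(X) \to A \otimes C(X)$ by $\phi_1(p_y) = \sum_{x \in X} q(a_{y,x}) \otimes p_x$ (which is essentially the first-level coaction $(q \otimes \id_1)\gamma_1$) and $\phi_2 : A \to A \otimes C(X)$ by $\phi_2(a) = \sum_{x \in X} \widetilde{\rho_x}(a) \otimes p_x$, and then set $\psi(f \otimes a) := \phi_1(f)\phi_2(a)$. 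The target formula on generators is $\psi(p_y \otimes q(a_{u,v})) = \sum_{x \in X} q(a_{yu,xv}) \otimes p_x$, matching the computation in the proof of Theorem~\ref{thm:psi}.

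That $\phi_1$ and $\phi_2$ are unital $*$-homomorphisms follows routinely from Definition~\ref{def:qaut} and the fact that each $\widetilde{\rho_x}$ is unital. The main obstacle will be verifying that their ranges commute, which reduces to the identity $q(a_{y,x})\widetilde{\rho_x}(a) = \widetilde{\rho_x}(a) q(a_{y,x})$ for all $a \in A$ and $x, y \in X$. I would verify this on generators $a = q(a_{u,v})$ using $\widetilde{\rho_x}(q(a_{u,v})) = q(\rho_x(a_{u,v})) = \sum_z q(a_{zu,xv})$ together with the multiplication rules in Remark~\ref{rmks:aboutAX}(iii); both products collapse to $q(a_{yu,xv})$, and the commutation then extends to all of $A$ by density.

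Once $\psi$ is defined, the remaining checks are routine but tedious. For \eqref{eq:actionrestrictionhom} I would verify the equality on generators $p_y \otimes q(a_{u,v})$, showing that both sides expand to $\sum_{w \in X^{n+1}} \sum_{x \in X} q(a_{yu,w}) \otimes q(a_{w,xv}) \otimes p_x$ using the formula for $\Delta$ together with the definition of $\psi$. For \eqref{eq:actionrestrictionhomdensity} I would observe that $\psi|_{C(X) \otimes 1_A} = \phi_1 = (q \otimes \id_1)\gamma_1$, so the density condition is exactly the Podle\'s condition for this coaction, which is inherited from the Podle\'s condition for $\gamma_1$ (established in Proposition~\ref{prop:treeauts}) together with surjectivity of $q$.
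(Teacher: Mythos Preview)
Your proposal is correct and follows essentially the same approach as the paper. The paper defines $\psi$ by the same formula $\psi(p_x\otimes q(a_{u,v})) = \sum_{y\in X} q(a_{xu,yv}) \otimes p_y$, dismisses its existence as ``routine to check'', verifies \eqref{eq:actionrestrictionhom} on generators exactly as you outline, and handles \eqref{eq:actionrestrictionhomdensity} by a short explicit computation expressing $q(a)\otimes p_z$ as an element of $\psi(C(X)\otimes 1)(A\otimes 1)$. Your explicit factorisation $\psi = \phi_1\cdot\phi_2$ via commuting ranges is precisely what lies behind the paper's ``routine'' step, and your appeal to the Podle\'s condition for $\gamma_1$ in place of the direct computation is a cosmetic variation; you are also slightly more careful than the paper in noting that $(2)\Rightarrow(1)$ requires identifying $\pi$ with the given quotient map $q$.
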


\begin{proof}
	Theorem~\ref{thm:psi} is the implication (2) $\Longrightarrow$ (1). To see (1) $\Longrightarrow$ (2) suppose $(A,\Phi)$ is a quantum self-similar group. By definition there is a surjective quantum group morphism $q:\A_X \to A$. It is routine to check that there is a homomorphism $\psi : C(X)\otimes A \to A \otimes C(X)$ satisfying
	\[
	\psi(p_x\otimes q(a_{u,v})) = \sum_{y\in X} q(a_{xu,yv}) \otimes p_y.
	\]
	Given $u,v \in X^n$ we have
	\begin{align*}
		(\Phi\otimes\id_1)\psi(p_x\otimes q(a_{u,v})) &= \sum_{y\in X} \Phi(q(a_{xu,yv})) \otimes p_y \\
		&= \sum_{w\in X^n} \sum_{y,z \in X} q(a_{xu,zw}) \otimes q(a_{zw,yv}) \otimes p_y \\
		&= (\id_A\otimes\psi)\left( \sum_{w\in X^n} \sum_{z\in X} q(a_{xu,zw}) \otimes p_z \otimes q(a_{w,v}) \right) \\
		&= (\id_A\otimes\psi)(\psi\otimes\id_A)\left(\sum_{w\in X^n} p_x \otimes q(a_{u,w})\otimes q(a_{w,v}) \right)\\
		&= (\id_A\otimes\psi)(\psi\otimes\id_A)(\id_1\otimes\Phi)(p_x\otimes q(a_{u,v})),
	\end{align*}
	and so $\psi$ satisfies \eqref{eq:actionrestrictionhom}. For \eqref{eq:actionrestrictionhomdensity} notice that for any $q(a) \in A$ and $z\in X$ we have
	\begin{align*}
		q(a) \otimes p_z &= (1\otimes p_z)(q(a)\otimes 1) \\
		&= \left(\sum_{x\in X} q(a_{x,z})\otimes p_z) \right)(q(a)\otimes 1) \\
		&= \left(\sum_{x,w\in X}q(a_{x,w})\otimes p_w) (q(a_{x,z})\otimes 1)\right)(q(a)\otimes 1) \\
		&= \sum_{x\in X} \psi(p_x\otimes 1) (q(a_{x,z}a)\otimes 1).\qedhere
	\end{align*}
\end{proof}

\begin{example}\label{eg:commutativeSSG}
	If $G$ is a closed subgroup of $\Aut(X^*)$ which is self-similar, then $C(G)$ is a commutative self-similar quantum group. The quotient map $\A_X \to C(Aut(X^*))$ takes a generator $a_{u,v}$ to the indicator function $f_{u,v}$ defined in Remark \ref{rmk:classicalautgroup}. For a function $f\in C(\Aut(X^*))$ and $x\in X$ the restriction homomorphism $\tilde{\rho_x}$ satisfies $\tilde{\rho_x}(f)(g)=f(g|_x)$, for any $g\in G$.
\end{example}

\section{Finitely constrained self-similar quantum groups}\label{sec:finitelyconstrained}

\subsection{Classical finitely constrained self-similar groups}

Fix $d\geq 1$, and let $X^{[d]} = \bigcup_{k \leq d} X^k$ be the finite subtree of $X^*$ of depth $d$. The group of automorphism $\Aut(X^{[d]})$ is a quotient of $\Aut(X^*)$, and the quotient map is given by restriction to the finite subtree. We write $r_d : \Aut(X^*) \to \Aut(X^{[d]})$ for this restriction map.

Fix a subgroup $P \leq \Aut(X^{[d]})$. Define
\[
G_P := \{ g\in \Aut(X^*) : r_d(g|_w) \in P \mbox{ for all } w\in X^* \}.
\]
By the properties of restriction, if $g,h\in G_P$, then for any $w\in X^*$
\[
r_d((gh)|_w) = r_d(g|_{h\cdot w} h|_w) = r_d(g|_{h\cdot w}) r_d(h|_w) \in P.
\]
Likewise, $r_d(g^{-1}|_w) = r_d(g|_{g^{-1}\cdot w})^{-1} \in P$. Hence $G_P$ is a self-similar group, called a \emph{finitely constrained self-similar group}. More details for these groups can be found in \cite{Sunic11}.

\subsection{Finitely constrained self-similar quantum groups}

Consider the subalgebra $\mathbb{A}_d \subseteq \mathbb{A}_X$ generated by the elements $\{a_{u,v} : |u|=|v|\leq d\}$. Since $\Delta : \mathbb{A}_d \to \mathbb{A}_d\otimes \mathbb{A}_d$ the subalgebra $\mathbb{A}_d$ is a quotient quantum group. The abelianisation of $\mathbb{A}_d$ is the algebra $C(\Aut(X^{[d]})$ of continuous functions on the finite group $\Aut(X^{[d]})$.

\begin{definition}\label{def:FinitelyConstrainedCQG}
	Suppose $\mathbb{P}$ is a quantum subgroup of $\mathbb{A}_d$, where $\Pp=\A_d/I$. Denote by $q_I : \mathbb{A}_d \to \mathbb{P}$ the quotient map; so $I = \ker(q_I)$. We denote by $J$ the smallest closed 2-sided ideal of $\mathbb{A}_X$ generated by $\{\rho_w(I) : w\in X^*\}$, and by $A_\Pp$ the quotient $A_\Pp:=\A_X/J$. In the next result we prove that $A_\Pp$ is a self-similar quantum group, and we call it a \textit{finitely constrained self-similar quantum group}.
\end{definition}

\begin{prop}\label{prop:GPisaCQG}
	Each $A_\Pp$ is a self-similar quantum group.
\end{prop}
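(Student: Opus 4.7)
The plan is to establish two things: (a) each restriction $\rho_x$ descends to a homomorphism $\widetilde\rho_x\colon A_\Pp\to A_\Pp$, and (b) $J$ is a Woronowicz ideal of $\A_X$, so that $A_\Pp$ is itself a compact quantum subgroup of $\A_X$. Part (a) will be essentially immediate: since $\rho_x\circ\rho_w=\rho_{xw}$ we have $\rho_x(\rho_w(I))=\rho_{xw}(I)\subseteq J$ for every $w\in X^*$, and because $\rho_x$ is a continuous $*$-homomorphism and $J$ is the closed two-sided ideal generated by $\bigcup_{w\in X^*}\rho_w(I)$, we conclude $\rho_x(J)\subseteq J$, so $\rho_x$ factors through $q\colon \A_X\to A_\Pp$.

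For (b) I will show $\Delta(J)\subseteq J\otimes\A_X+\A_X\otimes J=\ker(q\otimes q)$. The right-hand side is a closed two-sided ideal of $\A_X\otimes\A_X$, so it is enough to prove $\Delta(\rho_w(i))\in J\otimes\A_X+\A_X\otimes J$ for each $i\in I$ and $w\in X^*$; I will do this by induction on $|w|$. The key technical input is the intertwining identity
\[
\Delta\circ\rho_x \;=\; \sum_{y\in X}\bigl((1\otimes a_{y,x})\cdot(\rho_y\otimes\rho_x)\bigr)\circ\Delta, \qquad (\star)
\]
which I claim holds on all of $\A_X$. First I will verify $(\star)$ on generators $a_{u,v}$: expanding $\Delta(\rho_x(a_{u,v}))=\sum_{y,w}a_{yu,w}\otimes a_{w,xv}$, splitting $w=z_0 w'$ and using $\sum_y a_{yu,z_0 w'}=\rho_{z_0}(a_{u,w'})$ together with the identity $a_{z_0,x}\rho_x(a_{w',v})=a_{z_0 w',xv}$ (a direct consequence of the projection relations in Remarks~\ref{rmks:aboutAX}(iii)) gives exactly the right-hand side of $(\star)$. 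Upgrading $(\star)$ to all of $\A_X$ will require the crucial observation that $a_{y,x}$ commutes with every element of $\rho_x(\A_X)$: on generators, $a_{y,x}\cdot a_{zu,xv}=\delta_{y,z}\,a_{zu,xv}=a_{zu,xv}\cdot a_{y,x}$, and this extends to the whole image by linearity and continuity. Combined with the column orthogonality $a_{y,x}a_{y',x}=\delta_{y,y'}a_{y,x}$, this commutation will let me check that $(\star)$ is preserved under products of generators, after which continuity of all the maps involved extends $(\star)$ from the dense $*$-subalgebra generated by the $a_{u,v}$ to all of $\A_X$.

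With $(\star)$ in hand the induction on $|w|$ will be straightforward. The base case $w=\varnothing$ reduces to $\Delta(I)\subseteq I\otimes\A_d+\A_d\otimes I\subseteq J\otimes\A_X+\A_X\otimes J$, which holds because $I$ is a Woronowicz ideal of $\A_d$. For the inductive step, write $w=xw'$ and set $j=\rho_{w'}(i)$; the inductive hypothesis gives $\Delta(j)\in J\otimes\A_X+\A_X\otimes J$, and $(\star)$ then yields $\Delta(\rho_x(j))=\sum_y(1\otimes a_{y,x})(\rho_y\otimes\rho_x)(\Delta(j))$, which still lies in $J\otimes\A_X+\A_X\otimes J$ because $\rho_y(J)\subseteq J$ by part (a) and both $J\otimes\A_X$ and $\A_X\otimes J$ are preserved under left and right multiplication. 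I expect the main obstacle to be the verification of $(\star)$, and in particular the check that it is multiplicative---this is precisely where the commutation of $a_{y,x}$ with $\rho_x(\A_X)$, together with the orthogonality among the $a_{y,x}$, does the essential work.
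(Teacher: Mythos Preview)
Your overall architecture matches the paper's: the key identity $(\star)$ is exactly the paper's Lemma~\ref{lem:DeltaAndRho} (specialised to a single letter), and your part (a) is the paper's one-line observation that $\rho_x(J)\subseteq J$. The paper proves $(\star)$ directly for arbitrary $w\in X^n$ and then applies it once, while you prove it for $|w|=1$ and induct; both are fine.

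There is, however, a genuine gap in your treatment of the Woronowicz condition. You write $J\otimes\A_X+\A_X\otimes J=\ker(q\otimes q)$ and, in the base case, $\Delta(I)\subseteq I\otimes\A_d+\A_d\otimes I$. Neither identification is justified: for the minimal tensor product one always has $\overline{J\otimes A+A\otimes J}\subseteq\ker(q\otimes q)$, but the reverse inclusion requires an exactness property that is not known (and likely fails) for $\A_X$ or $\A_d$. The Woronowicz hypothesis on $I$ only gives $\Delta(I)\subseteq\ker(q_I\otimes q_I)$, not containment in the algebraic ideal $I\otimes\A_d+\A_d\otimes I$, so your base case as written does not go through.

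The fix is simple and is precisely what the paper does (Lemma~\ref{lem:KernelsAndQuotients}): carry the induction in $\ker(q_J\otimes q_J)$ rather than in $J\otimes\A_X+\A_X\otimes J$. For the base case, since $I\subseteq J$ the restriction $q_J|_{\A_d}$ factors as $\pi\circ q_I$, so $\ker(q_I\otimes q_I)\subseteq\ker(q_J\otimes q_J)$. For the inductive step, $\rho_y(J)\subseteq J$ gives a factorisation $q_J\circ\rho_y=\widetilde\rho_y\circ q_J$, whence $(\rho_y\otimes\rho_x)(\ker(q_J\otimes q_J))\subseteq\ker(q_J\otimes q_J)$; combined with $(\star)$ this yields $(q_J\otimes q_J)\Delta(\rho_x(j))=0$ as required. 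With this adjustment your argument is correct and essentially coincides with the paper's.
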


To prove Proposition~\ref{prop:GPisaCQG} we need two lemmas. Recall that for $g,h \in \Aut(X^*), w\in X^*$ we have
\[
(gh)|_w = g|_{h\cdot w} h|_w.
\]
In the first lemma, we establish an analogous relationship between the comultiplication $\Delta$ on $\A_X$ and the restriction maps $\rho_w$.

\begin{lemma}\label{lem:DeltaAndRho}
	For any $n\geq 1, w\in X^n$ and $a \in \mathbb{A}_X$ we have
	\[
	(\Delta\circ\rho_w)(a) = \sum_{y\in X^n} (1\otimes a_{y,w})(\rho_y\otimes\rho_w)(\Delta(a)).
	\]
\end{lemma}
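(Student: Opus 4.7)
The strategy is to verify the identity on the generators $a_{u,v}$ of $\mathbb{A}_X$ and then extend to the dense $*$-subalgebra by multiplicativity. The left-hand side $\Delta\circ\rho_w$ is a composition of $*$-homomorphisms and hence multiplicative, so the nontrivial part of the extension will be to show that the right-hand side is also multiplicative as a function of $a$; denote it by $F(a)$.

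To check the identity on $a_{u,v}$ with $u,v\in X^m$, I would use the formula $\rho_w(a_{u,v})=\sum_{z\in X^n}a_{zu,wv}$, apply $\Delta$, and split each index $t\in X^{n+m}$ as $t=t_1t_2$ with $t_1\in X^n$, $t_2\in X^m$ to obtain
\[
(\Delta\circ\rho_w)(a_{u,v})=\sum_{z,t_1\in X^n}\sum_{t_2\in X^m}a_{zu,t_1t_2}\otimes a_{t_1t_2,wv}.
\]
For the right-hand side, $(\rho_y\otimes\rho_w)(\Delta(a_{u,v}))=\sum_{s\in X^m}\sum_{z,z'\in X^n}a_{zu,ys}\otimes a_{z's,wv}$. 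The identity $a_{y,w}\cdot a_{z's,wv}=\delta_{y,z'}\,a_{z's,wv}$ (a routine consequence of Remarks~\ref{rmks:aboutAX}(iii) after iterating relation~(3) of Definition~\ref{def:qaut}, giving $a_{y,w}=\sum_{u',v'\in X^m}a_{yu',wv'}$) collapses the $z'$-sum to $z'=y$, and relabeling recovers the expression above.

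For multiplicativity of $F$, the key observation is that $(1\otimes a_{y,w})$ commutes with every element of the form $c\otimes\rho_w(d)$. Commutation in the first tensor factor is trivial; in the second, a direct check on generators shows $a_{y,w}\,\rho_w(a_{s,v})=\rho_w(a_{s,v})\,a_{y,w}=a_{ys,wv}$, and this extends to all of $\mathbb{A}_X$ by linearity and continuity. Since $(\rho_{y'}\otimes\rho_w)(\Delta(a))$ lies in the closure of the linear span of such products, $(1\otimes a_{y,w})$ commutes with it. Combined with the orthogonality $a_{y,w}a_{y',w}=\delta_{y,y'}a_{y,w}$, this yields
\[
F(a)F(b)=\sum_{y,y'}(1\otimes a_{y,w}a_{y',w})(\rho_y\otimes\rho_w)(\Delta(a))(\rho_{y'}\otimes\rho_w)(\Delta(b))=F(ab).
\]

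The main obstacle I expect is this commutation step: the identity $a_{y,w}\rho_w(b)=\rho_w(b)a_{y,w}$ is where the magic-square structure of the defining relations really enters, and without it the right-hand side would not a priori be multiplicative. Once it is established, the remainder is bookkeeping on generators together with a standard density argument.
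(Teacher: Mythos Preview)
Your proposal is correct and follows essentially the same approach as the paper's proof: verify the identity on generators by direct computation, then extend by showing the right-hand side is multiplicative. You are in fact more careful than the paper on the multiplicativity step --- the paper simply asserts that multiplicativity ``follows from the orthogonality of the projections $1\otimes a_{y,w}$ and $1\otimes a_{z,w}$ for $y\neq z$,'' while you correctly observe that one also needs the commutation $a_{y,w}\rho_w(b)=\rho_w(b)a_{y,w}$ to bring the projections together before applying orthogonality.
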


\begin{proof}
	Let $a_{u,v}$ be a generator of $\mathbb{A}_X$, with $|u| = |v| = k \geq 0$. Then
	\begin{align*}
		(\Delta\circ\rho_w)(a_{u,v}) &= \Delta\left( \sum_{\alpha\in X^n} a_{\alpha u,wv} \right) \\
		&= \sum_{y\in X^n} \sum_{\beta\in X^k} \sum_{\alpha\in X^n} a_{\alpha u,y\beta} \otimes a_{y\beta,wv} \\
		&= \sum_{y\in X^n} \sum_{\beta\in X^k} \rho_y(a_{u,\beta}) \otimes a_{y\beta,wv} \\
		&= \sum_{y\in X^n} \sum_{\beta\in X^k} \rho_y(a_{u,\beta}) \otimes a_{y,w}\rho_w(a_{\beta,v}) \\
		&= \sum_{y\in X^n} (1\otimes a_{y,w}) (\rho_y \otimes \rho_w)\left(\sum_{\beta\in X^k} a_{u,\beta}\otimes a_{\beta, v}\right) \\
		&= \sum_{y\in X^n} (1\otimes a_{y,w}) (\rho_y \otimes \rho_w(\Delta(a_{u,v})).
	\end{align*}
	To see that this formula extends to $\mathbb{A}_X$ it's enough to show that for any $w\in X^*$ the map 
	\[
	a \mapsto \sum_{y\in X^n} (1\otimes a_{y,w})(\rho_y\otimes\rho_w)(\Delta(a))
	\]
	is linear and multiplicative. Linearity is clear, and multiplicativity follows from the orthogonality of the projections $1\otimes a_{y,w}$ and $1\otimes a_{z,w}$ for $y\neq z$.
\end{proof}

\begin{lemma}\label{lem:KernelsAndQuotients}
	Consider the quotient maps $q_I : \mathbb{A}_d \to \mathbb{A}_d/ I$ and $q_J : \mathbb{A}_X \to \mathbb{A}_X/ J$. Then for any $n\geq 1$ and $y,w \in X^n$
	\[
	\ker (q_I \otimes q_I) \subseteq \ker ((q_J\circ \rho_y) \otimes (q_J\circ \rho_w)).
	\]
\end{lemma}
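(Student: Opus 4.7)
The plan is to leverage the universal property of quotient $C^*$-algebras together with the functoriality of the tensor product. The key observation is that by the very definition of $J$ as the closed two-sided ideal of $\A_X$ generated by $\{\rho_w(I) : w \in X^*\}$, we have the containment $\rho_y(I) \subseteq J$ for \emph{every} $y \in X^*$ (where we view $I \subseteq \A_d$ as a subset of $\A_X$ via the inclusion $\A_d \hookrightarrow \A_X$). In particular, the $*$-homomorphism $q_J \circ \rho_y \colon \A_X \to \A_X/J$, when restricted to $\A_d$, vanishes on the ideal $I = \ker q_I$.

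Given this, I would apply the universal property of the quotient $\A_d / I = \Pp$: there exists a unique $*$-homomorphism $\tilde{\rho}_y \colon \Pp \to \A_X/J$ such that $q_J \circ \rho_y|_{\A_d} = \tilde{\rho}_y \circ q_I$. The same reasoning, applied with $w$ in place of $y$, produces a unique $\tilde{\rho}_w \colon \Pp \to \A_X/J$ satisfying $q_J \circ \rho_w|_{\A_d} = \tilde{\rho}_w \circ q_I$.

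The final step is to tensor these factorizations. By functoriality of the (minimal) $C^*$-tensor product of $*$-homomorphisms, we obtain the identity
\[
(q_J \circ \rho_y) \otimes (q_J \circ \rho_w)\big|_{\A_d \otimes \A_d} = (\tilde{\rho}_y \otimes \tilde{\rho}_w) \circ (q_I \otimes q_I).
\]
Consequently, any element $x \in \ker(q_I \otimes q_I)$ is sent to $0$ by the right-hand side and hence also by the left-hand side, which gives exactly the desired inclusion $\ker(q_I \otimes q_I) \subseteq \ker((q_J\circ\rho_y)\otimes(q_J\circ\rho_w))$.

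I do not anticipate a genuine obstacle here: the entire argument reduces to the single observation $\rho_y(I) \subseteq J$, which is essentially the definition of $J$. No direct computation with the generators $a_{u,v}$ is needed, and the remaining algebraic manipulations are all standard facts about $C^*$-quotients and tensor products of $*$-homomorphisms.
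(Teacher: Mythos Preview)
Your proposal is correct and follows essentially the same route as the paper: both arguments observe that $\rho_y(I)\subseteq J$ by definition of $J$, invoke the universal property of the quotient $\A_d/I$ to factor $q_J\circ\rho_y|_{\A_d}$ through $q_I$, and then tensor the resulting factorizations to conclude. The only cosmetic difference is that the paper writes the induced map as $\pi_w$ with codomain $\A_X/\ker(q_J\circ\rho_w)$ rather than your $\tilde\rho_y$ with codomain $\A_X/J$, but the content is identical.
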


\begin{proof}
	By definition of $J$ we have $I\subseteq J\circ\rho_w$ for any $w\in X^*$. Therefore there is a commuting diagram
	\begin{center}
		\begin{tikzcd}
			\mathbb{A}_d \arrow[r, hook] \arrow[d, "q_I"]
			& \mathbb{A}_X \arrow[d, "q_J\circ \rho_w"] \\
			\mathbb{A}_d/I \arrow[r, "\pi_w"]
			& \mathbb{A}_X/\ker(q_J\circ \rho_w)
		\end{tikzcd}
	\end{center}
	Then if $c \in \ker(q_I\otimes q_I)$ we have
	\[
	(q_J\circ \rho_y) \otimes (q_J\circ \rho_w)(c) = (\pi_y \circ q_I)\otimes (\pi_w\circ q_I) (c) = \pi_y\otimes\pi_w)\circ(q_I\otimes q_I)(c) =0,
	\]
	as required.
\end{proof}

\begin{proof}[Proof of Proposition~\ref{prop:GPisaCQG}]
	To see that $A_\Pp$ is a compact quantum group, it suffices to show that $J$ is a Woronowicz ideal. In other words, we need to show that $\Delta(J) \subset \ker (q_J\otimes q_J)$ where $q_J: \mathbb{A}_X \to \mathbb{A}_X / J =: A_\Pp$ is the quotient map. Since $J$ is generated as an ideal by $\bigcup_{w\in X^*} \rho_w(I)$ it's enough to show that
	\[
	(q_J\otimes q_J)(\Delta\circ\rho_w(i)) = 0
	\]
	for any $i\in I$ and $w\in X^*$. Because $I$ is a Woronowicz ideal we know that $\Delta(i) \in \ker(q_I\otimes q_I)$. Then by Lemmas~\ref{lem:DeltaAndRho}~and~\ref{lem:KernelsAndQuotients} we have
	\begin{align*}
		(q_J\otimes q_J)(\Delta\circ\rho_w(i)) & = (q_J\otimes q_J)\left( \sum_{y\in X^n} (1\otimes a_{y,w})(\rho_y\otimes\rho_w)(\Delta(i))\right)  \\
		&= \sum_{y\in X^n} (1\otimes q_J(a_{y,w}))(q_J\circ\rho_y\otimes q_J\circ\rho_w)(\Delta(i)) \\
		&=0.
	\end{align*}
	Finally, $A_\Pp$ is self-similar since by definition of $J$ we have $\rho_w(J) \subset J$ for any $w\in X^*$. 
\end{proof}

%
%
%
%

\subsection{Free wreath products}

It is well known that for any $d\ge 1$ the group $\Aut(X^{[d+1]})$ is isomorphic to the wreath product $\Aut(X^{[d]})\wr \Sym(X)$. Since $\Aut(X^*)$ is the inverse limit over $d$ of the groups $\Aut(X^{[d]})$, it can be thought as the infinitely iterated wreath product $\ldots\wr\Sym(X)\wr \Sym(X)$. It follows that $\Aut(X^*)\cong \Aut(X^*)\wr \Sym(X)$. More generally, it is shown in \cite{BS13} that if $P\le \Sym(X)=\Aut(X^{[1]})$, then the finitely constrained self-similar group $G_P$ is the infinitely iterated wreath product $\ldots \wr P\wr P$. In this section we prove in Theorem~\ref{thm:APWreath} an analogue of this result for finitely constrained self-similar quantum groups.

In \cite{Bichon04}, Bichon constructs a free wreath product of a compact quantum group by the quantum permutation group $\mathbb{A}_s(n)$. Bichon also comments in Remark~2.4 of \cite{Bichon04} that there is a natural analogue of this construction for free wreath products by quantum subgroups of $\mathbb{A}_s(n)$.  In this section we formally extend this definition to take free wreath products by any quantum subgroup of $\mathbb{A}_s(n)$, and we prove that the finitely constrained self-similar quantum group $A_\Pp$ induced from a quantum subgroup $\Pp$ of $A_s(n)$ is a free wreath product by $\Pp$. We begin by recalling the definition of the free wreath product from \cite{Bichon04}; note that we use our notation $\A_1$ instead of $A_s(|X|)$.

\begin{definition}\label{def:wreathproduct}
	Let $X$ be a set of at least two elements. Let $(A,\Phi)$ be a compact quantum group, and $\Pp$ a quantum subgroup of $\A_1$. For each $x\in X$, we denote by $\nu_x$ the inclusion of $A$ in the free product $C^*$-algebra $(\ast_{x\in X}A) \ast \Pp$. The \textit{free wreath product} of $A$ by $\Pp$ is the quotient of $(\ast_{x\in X}A) * \Pp$ by the two-sided ideal generated by the elements
	\[
	\nu_x(a)q_I(a_{x,y}) - q_I(a_{x,y})\nu_x(a), \ x,y\in X, a\in A.
	\]
	The resulting $C^*$-algebra is denoted by $A \ast_{X,w} \Pp$, and the quotient map is denoted by $q_w$. If $X$ is understood, we typically just write $A\ast_w\Pp$.
\end{definition}

\begin{thm}
	Let $(A,\Phi)$ be a compact quantum group, and $\Pp$ a quantum subgroup of $\A_1$. The free wreath product $A \ast_w \Pp$ from Definition~\ref{def:wreathproduct} is a compact quantum group with comultiplication $\Phi_w$ satisfying
	\begin{align}
		\label{eq:1forPhiw}\Phi_w(q_w(q_I(a_{x,y}))) &= \sum_{z\in X} q_w(q_I(a_{x,z}))\otimes q_w(q_I(a_{z,y}) )\\ 
		\label{eq:2forPhiw}\Phi_w(q_w(\nu_x(a))) &= \sum_{z\in X} (q_w\otimes q_w)\big((\nu_x \otimes \nu_z)(\Phi(a))(q_I(a_{x,z}) \otimes 1)\big), 
	\end{align}
	for each $x,y\in X$ and $a\in A$.
\end{thm}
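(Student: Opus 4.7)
The plan is to build $\Phi_w$ via the universal property of the underlying free product, verify it descends to the defining quotient, and then check coassociativity and the matrix criterion of Remark~\ref{CQG:matrixdefn}. First, using the universal property of $(\ast_{x\in X}A)\ast\Pp$, I will define a unital $\ast$-homomorphism $\widetilde{\Phi}_w\colon(\ast_{x\in X}A)\ast\Pp\to(A\ast_w\Pp)\otimes(A\ast_w\Pp)$ by prescribing it on each constituent. On the copy of $\Pp$, send $q_I(a_{x,y})$ to $\sum_z q_w(q_I(a_{x,z}))\otimes q_w(q_I(a_{z,y}))$; this is a $\ast$-homomorphism because $\Pp$ inherits a comultiplication from $\A_1$ as a quantum subgroup. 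For each $x\in X$, define the prescription on the $x$-th copy of $A$ by the right-hand side of \eqref{eq:2forPhiw}. Checking that this last map is a $\ast$-homomorphism is the first subtlety: unitality uses $\sum_z q_I(a_{x,z})=1$, preservation of adjoints uses $\Phi(a^{\ast})=\Phi(a)^{\ast}$, and multiplicativity expands $\Phi(a)\Phi(b)=\Phi(ab)$ and collapses the resulting double sum $\sum_{z,w}\cdots$ via the orthogonality relation $q_I(a_{x,z})q_I(a_{x,w})=\delta_{z,w}q_I(a_{x,z})$ together with the wreath-product commutation $q_w(\nu_x(b))\,q_w(q_I(a_{x,z}))=q_w(q_I(a_{x,z}))\,q_w(\nu_x(b))$.

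Next I will show that $\widetilde{\Phi}_w$ annihilates the ideal defining $A\ast_w\Pp$, so that it descends to $\Phi_w$; this amounts to showing $\widetilde{\Phi}_w(\nu_x(a))$ commutes with $\widetilde{\Phi}_w(q_I(a_{x,y}))$ in the target, which is a direct calculation using the same orthogonality and commutation relations. Coassociativity is then verified on generators: for $q_w(q_I(a_{x,y}))$ it reduces, writing $\pi:=q_w\circ q_I$, to the identity $\sum_{s,t}\pi(a_{x,t})\otimes\pi(a_{t,s})\otimes\pi(a_{s,y})=\sum_{s,t}\pi(a_{x,s})\otimes\pi(a_{s,t})\otimes\pi(a_{t,y})$ after reindexing, while for $q_w(\nu_x(a))$ it reduces to coassociativity of $\Phi$ on $A$ after collapsing a triple sum via the same projection orthogonality.

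For the compact quantum group property I apply the matrix criterion Remark~\ref{CQG:matrixdefn}. For each finite-dimensional corepresentation $a^\lambda=(a^\lambda_{ij})$ of $A$, define a matrix $W^\lambda$ indexed by $X\times\{1,\ldots,d_\lambda\}$ by
\[
W^\lambda_{(x,i),(y,j)}:=q_w(\nu_x(a^\lambda_{ij}))\,q_w(q_I(a_{x,y})),
\]
and include the trivial corepresentation of $A$, which recovers the matrix $u=(q_w(q_I(a_{x,y})))_{x,y\in X}$. A short calculation using \eqref{eq:1forPhiw}, \eqref{eq:2forPhiw}, and the above orthogonality gives $\Phi_w(W^\lambda_{(x,i),(y,j)})=\sum_{(z,k)}W^\lambda_{(x,i),(z,k)}\otimes W^\lambda_{(z,k),(y,j)}$. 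Invertibility of $W^\lambda$ and $(W^\lambda)^T$ follows from the factorisation $W^\lambda=D_\lambda\cdot(u\otimes I_{d_\lambda})$, where $D_\lambda$ is the block-diagonal matrix with $x$-block $q_w(\nu_x(a^\lambda))$: the matrix $u$ is magic unitary so $u^{-1}=u^T$, while each block is invertible with inverse $q_w(\nu_x((a^\lambda)^{-1}))$ since $a^\lambda\in M_{d_\lambda}(A)$ is invertible, with an analogous argument for $(W^\lambda)^T$ using invertibility of $(a^\lambda)^T$. Density of the generated $\ast$-subalgebra then follows from $\sum_y W^\lambda_{(x,i),(y,j)}=q_w(\nu_x(a^\lambda_{ij}))$, which together with the entries of $u$ recovers a generating set for $A\ast_w\Pp$.

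The main obstacle will be the first step: establishing multiplicativity of the prescription \eqref{eq:2forPhiw} on the $x$-th copy of $A$ requires simultaneously juggling the two-tensor-factor structure, the wreath-product commutation relations, and the magic-unitary orthogonality, and any mis-step produces spurious off-diagonal terms that fail to collapse.
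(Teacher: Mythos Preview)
Your proposal is correct and follows essentially the same route as the paper: construct $\widetilde{\Phi}_w$ via the universal property of the free product (verifying multiplicativity on each copy of $A$ using the magic-unitary orthogonality and the wreath commutation), descend to $\Phi_w$, check coassociativity on generators, and verify the matrix criterion with the matrices $W^\lambda_{(x,i),(y,j)}=q_w(\nu_x(a^\lambda_{ij})q_I(a_{x,y}))$ together with $u=(q_w(q_I(a_{x,y})))$. The only difference is cosmetic: for invertibility of $W^\lambda$ and $(W^\lambda)^T$ the paper writes down explicit inverses $b^{(\lambda,X)}$ and $c^{(\lambda,X)}$ and checks directly, whereas you use the factorisation $W^\lambda=D_\lambda\,(u\otimes I_{d_\lambda})$ (which is legitimate here because the wreath relation makes the relevant entries commute); and the paper makes explicit that the multiplicativity of $\phi_x$ is first verified on the dense Hopf $*$-subalgebra $\mathcal{A}$ via Sweedler notation and then extended by continuity, a detail you should also record.
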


\begin{proof}
	Since $I$ is a Woronowicz ideal, we have $\Delta|_I\subseteq \ker(q_I\otimes q_I)$, and so the map $(q_I\otimes q_I)\circ \Delta_{\A_1}$ descends to a map 
	\[
	\phi\colon \Pp\to \Pp\otimes\Pp\subseteq ((\ast_{x\in X}A) \ast \Pp)^{\otimes 2}.
	\] 
	Then $(q_w\otimes q_w)\circ \phi\colon \Pp\to (A\ast_w\Pp)^{\otimes 2}$ satisfies
	\[
	(q_w\otimes q_w)\circ \phi(q_I(a_{x,y}))=\sum_{z\in X}q_w(q_I(a_{x,z}))\otimes q_w(q_I(a_{z,y}))\quad\text{for all }x,y\in X.
	\]
	For each $x\in X$, consider the continuous linear map $\phi_x\colon A\to (A\ast_w\Pp)^{\otimes 2}$ given by
	\[
	\phi_x(a)=(q_w\otimes q_w)\left(\sum_{z\in X}(\nu_x\otimes\nu_z)(\Phi(a))(q_I(a_{x,z})\otimes 1)\right).
	\]
	We claim that $\phi_x$ is a homomorphism. To see this, let $\{a^\lambda = (a^\lambda_{i,j}) \in M_{d_\lambda}(A) : \lambda \in \Lambda\}$ is be a family of matrices satisfying (1)--(3) of Definition~\ref{CQG:matrixdefn}, and $\mathcal{A}$ be the $*$-subalgebra of $A$ spanned by the entries $a^\lambda_{i,j}$. Let $a,b\in \AA$ and use Sweedler's notation to write $\Phi(a)=a_{(1)}\otimes a_{(2)}$ and $\Phi(b)=b_{(1)}\otimes b_{(2)}$. We have 
	\[
		\phi_x(a)\phi_x(b)=\sum_{z,z'\in X}q_w\left(\nu_x(a_{(1)})q_I(a_{x,z})\nu_x(b_{(1)})q_I(a_{x,z'})\right)\otimes q_w(\nu_z(a_{(2)})\nu_{z'}(b_{(2)})),
	\]
	and then since
	\begin{align*}
		q_w\left(\nu_x(a_{(1)})q_I(a_{x,z})\nu_x(b_{(1)})q_I(a_{x,z'})\right)&=q_w\left(\nu_x(a_{(1)}b_{(1)})q_I(a_{x,z}a_{x,z'})\right)\\
		&= \delta_{z,z'}q_w\left(\nu_x(a_{(1)}b_{(1)})q_I(a_{x,z})\right),
	\end{align*}
	we have 
	\begin{align*}
		\phi_x(a)\phi_x(b)&=\sum_{z\in X}q_w\left(\nu_x(a_{(1)}b_{(1)})q_I(a_{x,z})\right)\otimes q_w(\nu_z(a_{(2)})\nu_{z}(b_{(2)}))\\
		&= (q_w\otimes q_w)\left(\sum_{z\in X}(\nu_x\otimes\nu_z)(a_{(1)}b_{(1)}\otimes a_{(2)}b_{(2)})(q_I(a_{x,z})\otimes 1)\right)\\
		&= (q_w\otimes q_w)\left(\sum_{z\in X}(\nu_x\otimes\nu_z)(\Phi(ab))(q_I(a_{x,z})\otimes 1)\right)\\
		&= \phi_x(ab).
	\end{align*}
	Since $\AA$ is dense in $A$, it follows that $\phi_x$ is a homomorphism on $A$. 
	
	The universal property of $(\ast_{x\in X}A) \ast \Pp$ now gives a homomorphism $\widetilde{\Phi}\colon (\ast_{x\in X}A) \ast \Pp\to (A\ast_w\Pp)^{\otimes 2}$ satisfying 
	\begin{align*}
		\widetilde{\Phi}(q_I(a_{x,y})) &= \sum_{z\in X} q_w(q_I(a_{x,z}))\otimes q_w(q_{\Pp}(a_{z,y}) ),\\
		\widetilde{\Phi}(\nu_x(a)) &= \sum_{z\in X} (q_w\otimes q_w)\big((\nu_x \otimes \nu_z)(\Phi(a))(q_I(a_{x,z}) \otimes 1)\big).
	\end{align*}
	For each $a\in \AA, x,y\in X$ we have
	\begin{align*}
		\widetilde{\Phi}(\nu_x(a)q_I(a_{x,y}))&= \sum_{z,z'\in X}q_w\left(\nu_x(a_{(1)})q_I(a_{x,z})q_I(a_{x,z'})\right)\otimes q_w(\nu_z(a_{(2)})q_I(a_{z',y}))\\
		&= \sum_{z,z'\in X}q_w\left(q_I(a_{x,z})\nu_x(a_{(1)})q_I(a_{x,z'})\right)\otimes q_w(q_I(a_{z',y})\nu_z(a_{(2)}))\\
		&= \widetilde{\Phi}(q_I(a_{x,y})\nu_x(a)).
	\end{align*}
	It follows that $\widetilde{\Phi}(\nu_x(a)q_I(a_{x,y}))=\widetilde{\Phi}(q_I(a_{x,y})\nu_x(a))$ for each $a\in A, x,y\in X$, and hence $\widetilde{\Phi}$ descends to the desired $\Phi_w\colon A\ast_w\Pp\to(A\ast_w\Pp)^{\otimes 2}$.
	
	We now claim that $(\id\otimes\Phi_w)\circ\Phi_w=(\Phi_w\otimes \id)\circ\Phi_w$. Since $\AA$ is dense in $A$, to see that $(\id\otimes\Phi_w)\circ\Phi_w$ and $(\Phi_w\otimes \id)\circ\Phi_w$ agree on each $q_w(\nu_x(A))$, it suffices to show that
	\[
	(\id\otimes\Phi_w)\circ\Phi_w(q_w(\nu_x(a^\lambda_{i,j})))=(\Phi_w\otimes \id)\circ\Phi_w(q_w(\nu_x(a^\lambda_{i,j}))),\,\text{for all }\lambda\in\Lambda, 1\le i,j \le d_\lambda.
	\]
	Routine calculations using \eqref{eq:1forPhiw}~and~\eqref{eq:2forPhiw} show that both sides of this equation are equal to
	\[
	\sum_{z,z'\in X}\sum_{1\le k,l\le d_\lambda}q_w^{\otimes 3}\big( \nu_x(a^\lambda_{i,k})q_I(a_{x,z})\otimes \nu_z(a^\lambda_{k,l})q_I(a_{z,z'})\otimes \nu_{z'}(a^\lambda_{l,j})\big),
	\]
	and hence are equal. So we have $(\id\otimes\Phi_w)\circ\Phi_w(q_q(\nu_x(A)))=(\Phi_w\otimes \id)\circ\Phi_w(q_q(\nu_x(A)))$ for each $x\in X$. It is straightforward to check that evaluating both $(\id\otimes\Phi_w)\circ\Phi_w$ and $(\Phi_w\otimes \id)\circ\Phi_w$ at $q_I(a_{x,y})$ gives
	\[
	\sum_{z,z'\in X}q_w^{\otimes 3}\left(q_I^{\otimes 3}\left(a_{x,z}\otimes a_{z,z'}\otimes a_{z',y}\right)\right).
	\] 
	Hence we have $(\id\otimes\Phi_w)\circ\Phi_w=(\Phi_w\otimes \id)\circ\Phi_w$.
	
	We now define the matrix $a^{X}$ by $a^X_{x,y}:=q_w(q_I(a_{x,y}))$, for $x,y\in X$; and for each $\lambda\in\Lambda$, $1\le i,j\le d_\lambda$, $x,y\in X$, the elements 
	\[
	a^{(\lambda,X)}_{(i,x),(j,y)}:=q_w(\nu_x(a^\lambda_{i,j})q_I(a_{x,y}))\in A \ast_w \Pp,
	\]
	define matrices $a^{(\lambda,X)}=(a^{(\lambda,X)}_{(i,x),(j,y)})$. To finish the proof we have to show that these matrices satisfy (1)--(3) of Definition~\ref{CQG:matrixdefn}.
	
	We have 
	\[
	\Phi_w(a^{(\lambda,X)}_{(i,x),(j,y)})=\Big(\sum_{z\in X}(q_w\otimes q_w)\big((\nu_x\otimes\nu_z)(\Phi(a^\lambda_{i,j}))(q_I(a_{x,z})\otimes 1)\big)\Big)(q_w\circ q_I)^{\otimes 2}(\Delta(a_{x,y})).
	\]
	We know that (1) is satisfied for the matrix $a^X$. For each $z\in X$ we have 
	\begin{align*}
		&(q_w\otimes q_w)\big((\nu_x\otimes\nu_z)(\Phi(a^\lambda_{i,j}))(q_I(a_{x,z})\otimes 1)\big)(q_w\circ q_I)^{\otimes 2}(\Delta(a_{x,y}))\\
		&\qquad= (q_w\otimes q_w)\Big((\nu_x\otimes\nu_z)\Big(\sum_{1\le k\le d_\lambda}a_{i,k}^\lambda\otimes a_{k,j}^\lambda\Big)(q_I(a_{x,z})\otimes 1)\Big)(q_w\circ q_I)^{\otimes 2}(\Delta(a_{x,y}))\\
		&\qquad= \sum_{1\le k\le d_\lambda}\big(q_w(\nu_x(a_{i,k}^\lambda))\otimes q_w(\nu_z(a_{k,j}^\lambda))\big)(q_w(q_I(a_{x,z}))\otimes 1)(q_w\circ q_I)^{\otimes 2}(\Delta(a_{x,y}))\\
		&\qquad= \sum_{1\le k\le d_\lambda}\big(q_w(\nu_x(a_{i,k}^\lambda))\otimes q_w(\nu_z(a_{k,j}^\lambda))\big)\sum_{z'\in X}q_w(q_I(a_{x,z}a_{x,z'}))\otimes q_w(q_I(a_{z',y})) \\
		&\qquad= \sum_{1\le k\le d_\lambda}\big(q_w(\nu_x(a_{i,k}^\lambda))\otimes q_w(\nu_z(a_{k,j}^\lambda))\big)(q_w(q_I(a_{x,z}))\otimes q_w(q_I(a_{z,y})) )\\
		&\qquad= \sum_{1\le k\le d_\lambda}q_w(\nu_x(a_{i,k}^\lambda)q_I(a_{x,z}))\otimes q_w(\nu_z(a_{k,j}^\lambda)q_I(a_{z,y})).
	\end{align*}
	It follows that
	\begin{align*}
		\Phi_w(a^{(\lambda,X)}_{(i,x),(j,y)})&=\sum_{z\in X}\sum_{1\le k\le d_\lambda}q_w(\nu_x(a_{i,k}^\lambda)q_I(a_{x,z}))\otimes q_w(\nu_z(a_{k,j}^\lambda)q_I(a_{z,y}))\\
		&=\sum_{z\in X}\sum_{1\le k\le d_\lambda}a^{(\lambda,X)}_{(i,x),(k,z)}\otimes a^{(\lambda,X)}_{(k,z),(j,y)},
	\end{align*}
	and so (1) holds for all matrices $a^{(\lambda,X)}$. To see that $a^{(\lambda,X)}$ is invertible, we define $b^{(\lambda,X)}$ by
	\[
	b^{(\lambda,X)}_{(i,x),(j,y)}:=q_w(q_I(a_{y,x})\nu_y((a^\lambda)^{-1}_{i,j})).
	\]
	Then we have 
	\begin{align*}
		(a^{(\lambda,X)}b^{(\lambda,X)})_{(i,x),(j,y)} &= \sum_{z\in X}\sum_{1\le k\le d_\lambda}a^{(\lambda,X)}_{(i,x),(k,z)}b^{(\lambda,X)}_{(k,z),(j,y)}\\
		&= q_w\Big(\sum_{z\in X}\sum_{1\le k\le d_\lambda}\nu_x(a^\lambda_{i,k})q_I(a_{x,z})q_I(a_{y,z})\nu_y((a^\lambda)^{-1}_{k,j})\Big)\\
		&= \delta_{x,y} q_w\Big(\sum_{1\le k\le d_\lambda}\nu_x(a^\lambda_{i,k})\Big(\sum_{z\in X}q_I(a_{x,z})\Big)\nu_x((a^\lambda)^{-1}_{k,j})\Big)\\
		& =\delta_{x,y} q_w\Big(\nu_x\Big(\sum_{1\le k\le d_\lambda}a^\lambda_{i,k}(a^\lambda)^{-1}_{k,j}\Big)\Big)\\
		&=\delta_{x,y}q_w(\nu_x((a^\lambda (a^\lambda)^{-1})_{i,j}))\\
		&= \delta_{x,y}\delta_{i,j}1.
	\end{align*}
	A similar calculation shows that $(b^{(\lambda,X)}a^{(\lambda,X)})_{(i,x),(j,y)} =\delta_{x,y}\delta_{i,j}1$, and so $a^{(\lambda,X)}$ is invertible. Similar calculations also show that $c^{(\lambda,X)}$ with entries
	\[
	c^{(\lambda,X)}_{(i,x),(j,y)}:=q_w(q_I(a_{x,y})\nu_x(((a^\lambda)^T)^{-1}_{i,j}))
	\]
	is the inverse of $(a^{(\lambda,X)})^T$.  
	
	We also have
	\[
	(a^X(a^X)^T)_{x,y}=\sum_{z\in X}a^X_{x,z}a^X_{y,z}=q_w\left(q_I\left(\sum_{z\in X}a_{x,z}a_{y,z}\right)\right)=\delta_{x,y}1.
	\]
	Similarly, $(a^X)^Ta^X$ is the identity. So $a^X$ and $(a^X)^T$ are mutually inverse, and (2) is satisfied. 
	
	We now claim that the entries of the matrices $\{a^{(\lambda,X)}:\lambda\in \Lambda\}\cup\{a^X\}$ span a dense subset of $A \ast_{X,w} \Pp$. For each $x,y\in X$ we obviously have  $q_w(q_I(a_{x,y}))$ in this span since they are the entries of $a^X$. For each $x\in X$, $\lambda\in\Lambda$ and $1\le i,j\le d_\lambda$ we have
	\[
	\sum_{y\in X}a^{(\lambda,X)}_{(i,x),(j,y)}=q_w\left(\nu_x(a^\lambda_{i,j})q_I\left(\sum_{y\in X}a_{x,y}\right)\right)=q_w(\nu_x(a^\lambda_{i,j})),
	\]
	and so each $q_w(\nu_x(a^\lambda_{i,j}))$ is in the span of the entries. The claim follows, and so (3) holds.
\end{proof}

\begin{thm}\label{thm:APWreath}
	Let $A_\Pp$ be a finitely-constrained self-similar quantum group in the sense of Definition~\ref{def:FinitelyConstrainedCQG}. There is a unital quantum group isomorphism $\pi\colon A_\Pp\to A_\Pp\ast_w \Pp$ satisfying 
	\begin{equation}\label{eq:wpeq1}
		\pi(q_J(a_{xu,yv}))=q_w\big(q_I(a_{x,y})\nu_x(q_J(a_{u,v}))\big)
	\end{equation}
	for all $x,y\in X$, $u,v\in X^m$, $m\ge 0$.
\end{thm}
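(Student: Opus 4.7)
The plan is to construct $\pi$ and a two-sided inverse by universal properties, then verify compatibility with the comultiplications. First I would define a homomorphism $\pi_0\colon\A_X\to A_\Pp\ast_w\Pp$ using the universal property of $\A_X$ by setting $b_{\varnothing,\varnothing}:=1$ and, for $n\ge 1$ and $xu,yv\in X^n$ with $x,y\in X$,
\[
b_{xu,yv}:=q_w\bigl(q_I(a_{x,y})\nu_x(q_J(a_{u,v}))\bigr).
\]
The wreath commutation makes $q_I(a_{x,y})$ and $\nu_x(q_J(a_{u,v}))$ commuting projections, so each $b_{xu,yv}$ is a projection; relation~(\ref{qaut:sum}) of Definition~\ref{def:qaut} for the $b$'s follows by pulling $\sum_\eta$ inside $\nu_x\circ q_J$ and invoking the corresponding relation for the $a_{u,v}$'s. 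To descend $\pi_0$ to $\pi\colon A_\Pp\to A_\Pp\ast_w\Pp$, it suffices to show $\pi_0(\rho_w(i))=0$ for every $w\in X^*$ and $i\in I$. The case $w=\varnothing$ is immediate since $\pi_0|_{\A_1}=(q_w|_\Pp)\circ q_I$; for $w=w_1w'$ with $|w|\ge 1$, I would introduce the auxiliary map
\[
\tau_{w_1}\colon A_\Pp\to A_\Pp\ast_w\Pp,\qquad \tau_{w_1}(a):=\sum_{z\in X}q_w\bigl(q_I(a_{z,w_1})\nu_z(a)\bigr),
\]
and verify that it is a $*$-homomorphism: for $z\neq z'$, the cross term $q_I(a_{z,w_1})\nu_z(a)q_I(a_{z',w_1})\nu_{z'}(b)$ in $\tau_{w_1}(a)\tau_{w_1}(b)$ vanishes because the wreath commutation moves $\nu_z(a)$ past $q_I(a_{z,w_1})$, after which $q_I(a_{z,w_1})q_I(a_{z',w_1})=0$ by orthogonality in $\Pp$. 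A direct generator-level computation gives $\pi_0\circ\rho_w|_{\A_1}=\tau_{w_1}\circ q_J\circ\rho_{w'}|_{\A_1}$, and $\rho_{w'}(I)\subset J$ by the definition of $J$ yields the desired vanishing.

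Next I would construct a two-sided inverse $\pi^{-1}\colon A_\Pp\ast_w\Pp\to A_\Pp$ via the universal property of the free wreath product. The map $\iota\colon\Pp\to A_\Pp$, $q_I(a_{x,y})\mapsto q_J(a_{x,y})$, is well defined since $I\subset J$. For each $x\in X$ I need a $*$-homomorphism $\mu_x\colon A_\Pp\to A_\Pp$ with $\mu_x(q_J(a_{u,v}))=\sum_y q_J(a_{xu,yv})$; the naive attempt to descend the map $\sigma_x$ of Remark~\ref{rem:thesigmamap} is obstructed because $\sigma_x(J)\subset J$ is not transparent. Instead I would extract $\mu_x$ from the self-similarity coaction $\psi\colon C(X)\otimes A_\Pp\to A_\Pp\otimes C(X)$ of Theorem~\ref{thm:psi}, available by Proposition~\ref{prop:GPisaCQG}. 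Writing $\psi(p_x\otimes a)=\sum_y\alpha_{x,y}(a)\otimes p_y$, each component $\alpha_{x,y}=(\id_{A_\Pp}\otimes\mathrm{ev}_y)\circ\psi(p_x\otimes\cdot)$ is a $*$-homomorphism with $\alpha_{x,y}(q_J(a_{u,v}))=q_J(a_{xu,yv})$, and I would set $\mu_x:=\sum_y\alpha_{x,y}$. Multiplicativity of $\mu_x$ reduces to $\sum_{y\neq y'}\alpha_{x,y}(a)\alpha_{x,y'}(b)=0$, which on generators becomes $q_J(a_{xu,yv}a_{xu',y'v'})=0$ for $y\neq y'$; this follows from $a_{xu,yv}=a_{x,y}a_{xu,yv}$ together with orthogonality $a_{x,y}a_{x,y'}=0$ for $y\neq y'$, and extends to $A_\Pp$ by bilinearity and continuity. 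The wreath commutation $\mu_x(a)\iota(q_I(a_{x,y}))=\iota(q_I(a_{x,y}))\mu_x(a)$ is then checked on generators (both sides collapse to $q_J(a_{xu,yv})$), so the universal property of the free wreath product produces $\pi^{-1}$.

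Finally, the identities $\pi\circ\pi^{-1}=\id$ and $\pi^{-1}\circ\pi=\id$ can be verified on generators using $\sum_y q_I(a_{x,y})=1$ in $\Pp$ to unfold $\nu_x$, and $a_{x,y}a_{xu,y'v}=\delta_{y,y'}a_{xu,yv}$ to collapse sums back to single generators. Compatibility with the comultiplications, $(\pi\otimes\pi)\circ\Delta_{A_\Pp}=\Phi_w\circ\pi$, is verified by expanding both sides on $q_J(a_{xu,yv})$ using the coproduct formula of Theorem~\ref{thm:AXisaCQG} on the left and formulas~\eqref{eq:1forPhiw}--\eqref{eq:2forPhiw} on the right; the orthogonality $q_I(a_{x,\xi})q_I(a_{x,\xi'})=\delta_{\xi,\xi'}q_I(a_{x,\xi})$ collapses both expressions to the same double sum indexed by $(\xi,s')\in X\times X^{|u|}$. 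I expect the main obstacle to be the construction of $\mu_x$ in the second paragraph: the direct $\sigma_x$-route is blocked, and the coaction route together with the magic-square vanishing $a_{xu,yv}a_{xu',y'v'}=0$ for $y\neq y'$ is what I would rely on to secure the inverse.
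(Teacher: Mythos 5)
Your proposal is correct and follows the same overall architecture as the paper's proof: define $\pi$ on generators via the universal property of $\A_X$, kill $J$, build an inverse from the universal properties of the free product and the wreath quotient, and check the comultiplication identity at the end. Two sub-steps are organised differently. First, to show $J\subseteq\ker\pi_0$ the paper proves the identity $\widetilde{\pi}(\rho_w(x_k))=\sum_{y}q_w\big(q_I(a_{y,w_1})\nu_y(q_J(\rho_{w'}(x_k)))\big)$ by induction on the length of products $x_k$ of generators and then extends by density; your observation that the right-hand side is $\tau_{w_1}\circ q_J\circ\rho_{w'}$ for a map $\tau_{w_1}$ that is itself a $*$-homomorphism (the cross-term vanishing is exactly the paper's inductive step) lets you check the identity only on the generators of $\A_1$, which suffices because $I\subseteq\A_1$; this is a mild but genuine streamlining. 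Second, for the inverse the paper descends $q_J\circ\sigma_x$ to $A_\Pp$ using $\sigma_x=\kappa\circ\rho_x\circ\kappa$ together with $\kappa(J)\subseteq J$, whereas you extract the components $\alpha_{x,y}$ from the structure map $\psi$ supplied by Proposition~\ref{prop:GPisaCQG} and the equivalence proposition of Section~\ref{sec:selfsimilarity}, and your verification that $\mu_x=\sum_y\alpha_{x,y}$ is multiplicative (via $\alpha_{x,y}(1)=q_J(a_{x,y})$ and column orthogonality) is sound. Both routes produce the same maps $\mu_x=\phi_x$. Be aware, though, that your route is less independent of the $\sigma_x$ descent than you suggest: the existence of $\psi$ with the formula $\psi(p_x\otimes q_J(a_{u,v}))=\sum_y q_J(a_{xu,yv})\otimes p_y$ is exactly the statement that the maps $a\mapsto\sum_y q_J(a_{xu,yv})$ are well defined on the quotient, i.e.\ it already encodes $\sigma_x(J)\subseteq J$; you are simply citing it from the earlier proposition rather than reproving it, which is legitimate but does not avoid the issue you flagged as an obstacle.
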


\begin{proof}
	
	We define $b_{\varnothing,\varnothing}$ to be the identity of $A_\Pp\ast_w\Pp$, and for each $x,y\in X$, $u,v\in X^m,m\ge 0$, 
	\[
	b_{xu,yv}:=q_w\big(q_I(a_{x,y})\nu_x(q_J(a_{u,v}))\big).
	\]
	We claim that this gives a family of projections satisfying (1)--(3) of Definition~\ref{def:qaut}. Condition (1) holds by definition. We have 
	\begin{align*}
		b_{xu,yv}^* &= q_w\big(\nu_x(q_J(a_{u,v}^*))q_I(a_{x,y}^*)\big)\\ 
		&= 	q_w\big(\nu_x(q_J(a_{u,v}))q_I(a_{x,y})\big)\\ 
		&= q_w\big(q_I(a_{x,y})\nu_x(q_J(a_{u,v}))\big)\\ 
		&= b_{xu,yv}
	\end{align*}
	and
	\[
	b_{xu,yv}^2=q_w\big(q_I(a_{x,y})\nu_x(q_J(a_{u,v}))q_I(a_{x,y})\nu_x(q_J(a_{u,v}))\big)=q_w\big(q_I(a_{x,y}^2)\nu_x(q_J(a_{u,v}^2))\big)=b_{xu,yv}.
	\]
	So (2) holds. For each $w\in X$ we have
	\begin{align*}
		\sum_{z\in X}b_{xuw,yvz} &= \sum_{z\in X} q_w\big(q_I(a_{x,y})\nu_x(q_J(a_{uw,vz}))\big)\\
		&=q_w\Big(q_I(a_{x,y})\nu_x\Big(q_J\Big(\sum_{z\in X}a_{uw,vz}\Big)\Big)\Big)\\
		&= q_w\big(q_I(a_{x,y})\nu_x(q_J(a_{u,v}))\big)\\
		&=b_{xu,yv},
	\end{align*}
	and
	\begin{align*}
		\sum_{z\in X}b_{xuz,yvw} &= \sum_{z\in X} q_w\big(q_I(a_{x,y})\nu_x(q_J(a_{uz,vw}))\big)\\
		&= q_w\Big(q_I(a_{x,y})\nu_x\Big(q_J\Big(\sum_{z\in X}a_{uz,vw}\Big)\Big)\Big)\\
		&= q_w\big(q_I(a_{x,y})\nu_x(q_J(a_{u,v}))\big)\\
		&=b_{xu,yv},
	\end{align*}
	and hence (3) holds. This proves the claim, and hence the universal property of $\A_X$ now gives a homomorphism $\widetilde{\pi}\colon\A_X\to A_\Pp\ast_w\Pp$ satisfying 
	\[
	\widetilde{\pi}(a_{xu,yv})=q_w\big(q_I(a_{x,y})\nu_x(q_J(a_{u,v}))\big),
	\]
	for all $x,y\in X$, $u,v\in X^m$, $m\ge 0$.
	
	We now claim that $J$ is contained in $\ker\widetilde{\pi}$. To see this, fix $w\in X^n$, with $w=w_1w'$ for $w_1\in X, w'\in X^{n-1}$. We first prove the claim that for each $x_k:=a_{u_1,v_1}\cdots a_{u_k,v_k}$, where $k\ge 1$ and each pair $u_i,v_i\in X^{m_i}$ for some $m_i\ge 0$, we have
	\begin{equation}\label{IdentityforJinKER}
		\widetilde{\pi}(\rho_w(x_k))=\sum_{y\in X} q_w\big(q_I(a_{y,w_1})\nu_y(q_J(\rho_{w'}(x_k)))\big).
	\end{equation}
	Let $k=1$. Then 
	\begin{align*}
		\widetilde{\pi}(\rho_w(a_{u_1,v_1})) &= \sum_{y\in X}\sum_{\alpha\in X^{n-1}}\widetilde{\pi}(a_{y\alpha u_1,wv_1})\\
		&= \sum_{y\in X}\sum_{\alpha\in X^{n-1}}q_w\big(q_I(a_{y,w_1})\nu_y(q_J(a_{\alpha u_1,w'v_1}))\big)\\
		&= \sum_{y\in X}q_w\Big(q_I(a_{y,w_1})v_y\Big(q_J\Big(\sum_{\alpha\in X^{n-1}}a_{\alpha u_1, w' v_1}\Big)\Big)\Big)\\
		&= \sum_{y\in X}q_w\big(q_I(a_{y,w_1})\nu_y(q_J(\rho_{w'}(a_{u_1,v_1})))\big),
	\end{align*}
	and so \eqref{IdentityforJinKER} holds for $k=1$. We now assume true for $x_k$, and prove for $x_{k+1}$. Note that for $y,y'\in X$ we have $q_I(a_{y,w_1})q_I(a_{y',w_1})=\delta_{y,y'}q_I(a_{y,w_1})$, and hence 
	\begin{align*}
		&q_w\big(q_I(a_{y,w_1})\nu_y(q_J(\rho_{w'}(x_k)))q_I(a_{y',w_1})\nu_{y'}(q_J(\rho_{w'}(a_{u_{k+1},v_{k+1}})))\big)\\
		&\qquad\qquad=q_w\big(\nu_y(q_J(\rho_{w'}(x_k)))q_I(a_{y,w_1})q_I(a_{y',w_1})\nu_{y'}(q_J(\rho_{w'}(a_{u_{k+1},v_{k+1}})))\big)\\
		&\qquad\qquad=\delta_{y,y'}q_w\big(\nu_y(q_J(\rho_{w'}(x_k)))q_I(a_{y,w_1})\nu_{y}(q_J(\rho_{w'}(a_{u_{k+1},v_{k+1}})))\big)\\
		&\qquad\qquad=\delta_{y,y'}q_w\big(q_I(a_{y',w_1})\nu_y(q_J(\rho_{w'}(x_k)))\nu_{y}(q_J(\rho_{w'}(a_{u_{k+1},v_{k+1}})))\big)\\
		&\qquad\qquad=\delta_{y,y'}q_w\big(q_I(a_{y',w_1})\nu_y(q_J(\rho_{w'}(x_ka_{u_{k+1},v_{k+1}})))\big).
	\end{align*}
	It follows that
	\[
	\widetilde{\pi}(\rho_w(x_{k+1)}) = \widetilde{\pi}(\rho_w(x_k))\widetilde{\pi}(\rho_w(a_{u_{k+1},v_{k+1}}))= \sum_{y\in X}q_w\big(q_I(a_{y',w_1})\nu_y(q_J(\rho_{w'}(x_ka_{u_{k+1},v_{k+1}})))\big),
	\]
	and it follows that \eqref{IdentityforJinKER} holds for all $k$. Since linear combinations of products of the form $x_k$ is a dense subalgebra of $\A_X$, it follows that 
	\[
	\widetilde{\pi}(\rho_w(a))=\sum_{y\in X} q_w\big(q_I(a_{y,w_1})\nu_y(q_J(\rho_{w'}(a)))\big)
	\]
	for all $a\in \A_X$. Now, if $a\in I$, then $\rho_{w'}(a)\in J=\ker q_J$, and hence the above equations shows that $\widetilde{\pi}(\rho_w(a))=0$. Hence $\rho_w(a)\in \ker\widetilde{\pi}$ for all $w\in X^n$ and $a\in I$, and hence $J\subseteq \ker \widetilde{\pi}$. This means $\widetilde{\pi}$ descends to a homomorphism $\pi\colon A_\Pp\to A_\Pp\ast_w\Pp$ satisfying 
	\[
	\pi(q_J(a_{xu,yv}))=q_w\big(q_I(a_{x,y})\nu_x(q_J(a_{u,v}))\big)
	\]
	for all $x,y\in X$, $u,v\in X^m$, $m\ge 0$.
	
	We now show that $\pi$ is an isomorphism by finding an inverse. For each $x\in X$ consider the homomorphism $q_J\circ\sigma_x\colon \A_X\to A_\Pp$, where $\sigma_x$ is the homomorphism from Remark~\ref{rem:thesigmamap}. Since $\sigma_x=\kappa\circ\rho_x\circ\kappa$, and we know from \cite[Remark~2.10]{Wang95} that $\kappa(J)\subseteq J$, it follows that $q_J\circ\sigma_x$ descends to a homomorphism $\phi_x\colon A_\Pp\to  A_\Pp$ satisfying 
	\[
	\phi_x(q_J(a_{u,v}))=q_J(\sigma_x(a_{u,v}))=\sum_{y\in X}q_J(a_{xu,yv}),
	\]
	for all $u,v\in X^m, m\ge 0$. 
	
	Each $\phi_x$, and the map $q_I(a)\mapsto q_J(a)$ from $\Pp$ to $A_\Pp$, now allow us to apply the universal property of the free product $(\ast_{x\in X}A_\Pp)\ast \Pp$ to get a homomorphism $\widetilde{\phi}\colon (\ast_{x\in X}A_\Pp)\ast \Pp\to A_\Pp$ satisfying $\widetilde{\phi}\circ \nu_x=\phi_x$ for each $x\in X$, and $\widetilde{\phi}(q_I(a))=q_J(a)$ for all $a\in \A_1\subseteq \A_X$. We claim that 
	\[
	\widetilde{\phi}\big(\nu_x(q_J(a_{u,v}))q_I(a_{x,y})-q_I(a_{x,y})\nu_x(q_J(a_{u,v})\big)=0,
	\]
	for each $x\in X$, $u,v\in X^m$, $m\in\N$. We have
	\begin{align*}
		&\widetilde{\phi}\big(\nu_x(q_J(a_{u,v}))q_I(a_{x,y})-q_I(a_{x,y})\nu_x(q_J(a_{u,v})\big)\\
		&\hspace{2cm}=\phi_x(q_J(a_{u,v}))q_J(a_{x,y})-q_G(a_{x,y})\phi_x(q_J(a_{u,v}))\\
		&\hspace{2cm}= \sum_{y\in X}q_J(a_{xu,yv})q_J(a_{x,y})-\sum_{y'\in X}q_J(a_{x,y})q_J(a_{xu,y'v})\\
		&\hspace{2cm}= q_J(a_{xu,yv})-q_J(a_{xu,yv})\\
		&\hspace{2cm}=0.
	\end{align*}
	It follows that $\widetilde{\phi}$ descends to a homomorphism $\phi\colon A_\Pp \ast_w \Pp\to A_\Pp$ satisfying 
	\[
	\phi(q_w(\nu_x(q_J(a_{u,v}))))=q_J(\sigma_x(a_{u,v}))=\sum_{y\in X}q_J(a_{xu,yv})
	\]
	for all $x\in X$, $u,v\in X^m,m\ge 0,$ and 
	\[
	\phi(q_w(q_I(a_{x,y})))=q_J(a_{x,y})
	\]
	for all $x,y\in X$. 
	
	We claim that $\pi$ and $\phi$ are mutually inverse. For $x,y\in X$, $u,v\in X^m,m\ge 0,$ we have
	\[
	\phi(\pi(q_J(a_{xu,yv}))) = \phi\big(q_w\big(q_I(a_{x,y})\nu_x(q_J(a_{u,v}))\big)\big)=q_J(a_{x,y})\sum_{y\in X}q_J(a_{xu,yv})=q_J(a_{xu,yv}),
	\]
	and it follows that $\phi\circ\pi$ is the identity on $A_\Pp$. For $x\in X$, $u,v\in X^m,m\ge 0,$ we have
	\begin{align*}
		\pi(\phi(q_w(\nu_x(q_J(a_{u,v})))))&=\pi\Big(\sum_{y\in X}q_J(a_{xu,yv})\Big)\\
		&=\sum_{y\in X}q_w\big(q_I(a_{x,y})\nu_x(q_J(a_{u,v}))\big)\\
		&= q_w\Big(q_I\Big(\sum_{y\in X}a_{x,y}\Big)\nu_x(q_J(a_{u,v}))\Big)\\
		&= q_w(\nu_x(q_J(a_{u,v}))),
	\end{align*}
	and for all $x,w\in X$ we have
	\[
	\pi(\phi(q_w(q_I(a_{x,y}))))=\pi(q_J(a_{x,y}))=q_w(q_I(a_{x,y})).
	\]
	Hence $\pi\circ\phi$ is the identity on $A_\Pp \ast_w \Pp$, and so $\pi$ is an isomorphism.
	
	We now need to show that $\pi$ is a homomorphism of compact quantum groups, which means that $\Delta_w\circ \pi=(\pi\otimes\pi)\circ\Delta_J$, where $\Delta_J$ is the comultiplication on $A_\Pp$. For $x,y\in X$, $u,v\in X^m,m\ge 0$, we have  
	\begin{align*}
		&(\pi\otimes\pi)\circ\Delta_J(q_J(a_{xu,yv}))\\
		&\qquad=\sum_{\alpha^{m+1}}\pi(q_J(a_{xu,\alpha}))\otimes \pi(q_J(a_{\alpha,yv}))\\
		&\qquad=\sum_{z\in X}\sum_{\beta\in X^{m}}\pi(q_J(a_{xu,z\beta}))\otimes \pi(q_J(a_{z\beta,yv}))\\
		&\qquad=\sum_{z\in X}\sum_{\beta\in X^{m}} q_w\big(q_I(a_{x,z})\nu_x(q_J(a_{u,\beta}))\big)\otimes q_w\big(q_I(a_{z,y})\nu_x(q_J(a_{\beta,v}))\big).
	\end{align*}
	We have 
	\[
	\Delta_w\circ\pi(q_J(a_{xu,yv}))=\Delta_w(q_w(q_I(a_{x,y})))\Delta_w(q_w(\nu_x(q_J(a_{u,v}))),
	\]
	where
	\begin{equation}\label{eq:1forCQGiso}
		\Delta_w(q_w(q_I(a_{x,y})))=\sum_{z\in X} q_w(q_I(a_{x,z}))\otimes q_w(q_{\Pp}(a_{z,y}) ),
	\end{equation}
	and
	\begin{align}
		\nonumber \Delta_w(q_w(\nu_x(q_J(a_{u,v})))
		&\nonumber =\sum_{z'\in X} (q_w\otimes q_w)\big((\nu_x \otimes \nu_{z'})(\Delta_J(q_J(a_{u,v})))(q_I(a_{x,z'}) \otimes 1)\big)\\ 
		&\label{eq:2forCQGiso} = \sum_{z'\in X}\sum_{\beta\in X^m}q_w\big(\nu_x(q_J(a_{u,\beta}))q_I(a_{x,z'})\big)\otimes q_w\big(\nu_{z'}(q_J(a_{\beta,v}))\big).
	\end{align}
	A typical summand in the product of  the expressions in \eqref{eq:1forCQGiso} and \eqref{eq:2forCQGiso} is
	\begin{align*}
		&q_w\big(q_I(a_{x,z})\nu_x(q_J(a_{u,\beta}))q_I(a_{x,z'})\big)\otimes q_w\big(q_{\Pp}(a_{z,y}) \nu_{z'}(q_J(a_{\beta,v}))\big)\\
		&\qquad\qquad\qquad = q_w\big(\nu_x(q_J(a_{u,\beta}))q_I(a_{x,z})q_I(a_{x,z'})\big)\otimes q_w\big(q_{\Pp}(a_{z,y}) \nu_{z'}(q_J(a_{\beta,v}))\big)\\
		&\qquad\qquad\qquad =\delta_{z,z'}q_w\big(\nu_x(q_J(a_{u,\beta}))q_I(a_{x,z})\big)\otimes q_w\big(q_{\Pp}(a_{z,y}) \nu_{z}(q_J(a_{\beta,v}))\big)\\
		&\qquad\qquad\qquad =\delta_{z,z'}q_w\big(q_I(a_{x,z})\nu_x(q_J(a_{u,\beta}))\big)\otimes q_w\big(q_{\Pp}(a_{z,y}) \nu_{z}(q_J(a_{\beta,v}))\big).
	\end{align*}
	Hence 
	\begin{align*}
		\Delta_w\circ\pi(q_J(a_{xu,yv}))
		&= \sum_{z\in X} q_w(q_I(a_{x,z}))\otimes q_w(q_{\Pp}(a_{z,y}) )\\
		&= \sum_{z\in X}\sum_{\beta\in X^{m}} q_w\big(q_I(a_{x,z})\nu_x(q_J(a_{u,\beta}))\big)\otimes q_w\big(q_I(a_{z,y})\nu_x(q_J(a_{\beta,v}))\big)\\
		&= (\pi\otimes\pi)\circ\Delta_J(q_J(a_{xu,yv})),
	\end{align*}
	and it follows that $\Delta_w\circ \pi=(\pi\otimes\pi)\circ\Delta_J$.
\end{proof}

\begin{example}\label{eg:examplesfrom4points}
	An immediate consequence of Theorem~\ref{thm:APWreath} is that $A_\Pp$ is noncommutative whenever $\Pp$ is a noncommutative quantum subgroup of $\A_1$. A class of such examples comes from Banica and Bichon's \cite[Theorem~1.1]{BB09}, in which they classify all the quantum subgroups $\Pp$ of $\A_1$ for $|X|=4$; the corresponding list of quantum groups $A_\Pp$ gives us a list of potentially interesting self-similar quantum groups for further study.
\end{example}

\end{document}